\documentclass[12pt]{amsart}
\usepackage{a4wide,enumerate,color}
\usepackage{mathrsfs} 
\allowdisplaybreaks

\let\pa\partial  
\let\na\nabla  
  
\newcommand{\N}{{\mathbb N}}  
\newcommand{\R}{{\mathbb R}} 
\newcommand{\diver}{\operatorname{div}} 
\newcommand{\en}{{\mathcal F}}
\newcommand{\E}{{\mathcal E}}
\newcommand{\diag}{\operatorname{diag}}
\newcommand{\W}{{\mathcal W}}

\newtheorem{theorem}{Theorem}   
\newtheorem{lemma}[theorem]{Lemma}   
\newtheorem{proposition}[theorem]{Proposition}   
\newtheorem{remark}[theorem]{Remark}   
\newtheorem{corollary}[theorem]{Corollary}  
\newtheorem{definition}{Definition}  
\newtheorem{example}{Example} 

 
\begin{document}  

\title[Displacement convexity]{Displacement convexity for the
entropy in semidiscrete nonlinear Fokker-Planck equations}

\author{Jos\'e A. Carrillo}
\address{J.A.C.: Department of Mathematics, Imperial College London, London SW7 2AZ,
United Kingdom}
\email{carrillo@imperial.ac.uk}

\author{Ansgar J\"ungel}
\address{A.J.: Institute for Analysis and Scientific Computing, Vienna University of  
	Technology, Wiedner Hauptstra\ss e 8-10, 1040 Wien, Austria}
\email{juengel@tuwien.ac.at} 

\author{Matheus C. Santos}
\address{M.C.S.: Departamento de Matem\'atica -- IMECC, Universidade Estadual de
Campinas, 13083-859, Campinas-SP, Brazil}
\email{matheus.santos@ufrgs.com}

\date{\today}

\thanks{The first author was partially supported by the Royal Society via a Wolfson 
Research Merit Award. The second author acknowledges partial support from   
the Austrian Science Fund (FWF), grants P22108, P24304, and W1245. The last author 
acknowledges the support from the S\~{a}o Paulo Research Foundation (FAPESP), 
grant $\#$2015/20962-7. We warmly thank the Institute Mittag-Leffler for providing 
a marvellous atmosphere for research while this paper has been finalized.} 

\begin{abstract}
The displacement $\lambda$-convexity of a nonstandard entropy with 
respect to a nonlocal transportation metric in finite state spaces
is shown using a gradient flow approach. 
The constant $\lambda$ is computed explicitly in terms of a priori
estimates of the solution to a finite-difference approximation of a 
nonlinear Fokker-Planck equation.
The key idea is to employ a new mean function,
which defines the Onsager operator in the gradient flow formulation.
\end{abstract}

\keywords{Entropy, displacement convexity, logarithmic mean, finite differences,
fast-diffusion equation.}  
 
\subjclass[2000]{60J27, 53C21, 65M20.}  

\maketitle


\section{Introduction}

Displacement convexity, which was introduced by McCann \cite{Mcc97},
describes the geodesic convexity of functionals on the space of probability measures
endowed with a transportation metric. Geodesic convexity has important
consequences for the existence and uniqueness of gradient flows in the space
of probability measures \cite{AGS05,CLSS10,Ott01}. It may also provide quantitative
contraction estimates between solutions of the gradient flows \cite{CMV06} and
exponential decay estimates \cite{AGS05}. Displacement 
$\lambda$-convexity of the entropy is equivalent to a lower bound on the
Ricci curvature $\mbox{Ric}_M$ of the Riemannian manifold $M$, 
i.e.\ $\mbox{Ric}_M\ge\lambda$ \cite{LoVi09,ReSt05}. 
Furthermore, it leads to inequalities in
convex geometry and probability theory, such as the Brunn-Minkowski, Talagrand,
and log-Sobolev inequalities \cite{Vil09}. 

We are interested in the question to what extent the concept of displacement 
convexity can be extended to discrete settings, like numerical discretization
schemes of gradient flows. As one step in this direction, we show in this paper
that a certain entropy functional, related to the finite-difference approximation 
of nonlinear Fokker-Planck equations,
is displacement convex. Before making this statement more precise, let us
review the state of the art of the literature.

The study of discrete gradient flows and related topics is very recent. 
First results were concerned with Ricci curvature bounds in discrete settings
\cite{BoSt09}. 
Markov processes and Fokker-Planck equations on finite graphs were
investigated by Chow et al.\ in \cite{CHLZ12}. Maas \cite{Maa11} and Mielke
\cite{Mie13} introduced nonlocal 
transportation distances on probability spaces such that
continuous-time Markov chains can be formulated as gradient flows of the entropy,
and they explored geodesic convexity properties of the functionals. The concept
of displacement convexity was used by Gozlan et al.\ \cite{GRST14} to derive
HWI and log-Sobolev inequalities on (complete) graphs. Talagrand's inequality 
was studied in discrete spaces by Sammer and Tetali \cite{SaTe09}. 

Only few results can be found in the literature on convexity properties of
functionals for discretizations of partial differential equations.
Exponential decay rates for time-continuous Markov chains were
derived by Caputo et al.\ \cite{CDP09}.
This result implies the displacement convexity of
the Shannon entropy for discretizations of one-dimensional
linear Fokker-Planck equations, as first investigated by Mielke \cite{Mie13}
(also see the presentation in \cite[Section~5.2]{Jue16}). 
While the proof of Caputo et al.\ \cite{CDP09} is based on the Bochner-Bakry-Emery
method, Mielke \cite{Mie13} employed a gradient flow approach together with
matrix estimates. 
The nonlocal transportation metric, 
needed for the definition of displacement convexity,
is induced by the logarithmic mean,
$$
  \Lambda(s,t) = \frac{s-t}{\log s-\log t}\quad\mbox{for }s\neq t, 
	\quad \Lambda(s,s)=s,
$$
which has some remarkable properties (proved in \cite{Mie13}
and summarized in Lemma \ref{lem.Lambda} below). 
The approach of \cite{CDP09} (and \cite{FaMa16}) was extended to general
convex entropy densities $f(s)$ in \cite{JuYu15} using the mean function
\begin{equation}\label{1.Laf}
  \Lambda^f(s,t) = \frac{s-t}{f'(s)-f'(t)}\quad\mbox{for }s\neq t, \quad
	\Lambda^f(s,s) = \frac{1}{f''(s)},
\end{equation}
which becomes the logarithmic mean for $f(s)=s(\log s-1)$.

Concerning nonlinear equations, we are only aware of two results.
Erbar and Maas \cite{ErMa14} showed that a discrete one-dimensional
porous-medium equation is a gradient flow of the R\'enyi entropy function
$f(s)=s^\alpha$
with respect to a suitable nonlocal transportation metric induced by the
mean function
$$
  \Lambda^\alpha(s,t) = \frac{\alpha-1}{\alpha}\,
	\frac{s^\alpha-t^\alpha}{s^{\alpha-1}-t^{\alpha-1}}\quad\mbox{for }s\neq t,
	\quad \Lambda^\alpha(s,s) = s.
$$
However, the R\'enyi entropy fails to be convex along geodesics with respect to
this transportation metric \cite{ErMa14}.
A weaker notion than geodesic convexity (called convex entropy decay), which is
strongly related to the Bakry-Emery method, was introduced by Maas and Matthes 
\cite{MaMa15} to prove exponential decay rates for finite-volume discretizations
of the quantum drift-diffusion equation. Its gradient flow formulation
is based on the Fisher information and the logarithmic mean.

In this, paper, we propose a new mean function by composing the logarithmic mean
with a nonlinear function (coming from the diffusivity), 
which is suitable for finite-difference
discretizations of the nonlinear Fokker-Planck equation
\begin{equation}\label{1.fp}
  \pa_t\rho = \pa_x\big(\pa_x\phi(\rho)-\phi(\rho)\pa_x V\big), \quad x\in(0,1),\ t>0,
\end{equation}
supplemented with no-flux boundary conditions and an initial condition.
Here, $\phi:[0,\infty)\to[0,\infty)$ is a continuous function
and $V(x)$ is a confinement potential. 
An example is $\phi(\rho)=\rho^\alpha$ with $\alpha>0$ and $V(x)=\gamma |x|^2/2$ 
with $\gamma\ge 0$. A computation shows that the entropy 
$$
  \en_c(\rho) = \int_0^1 \big(f(\rho) + \rho V(x)\big)dx, \quad\mbox{where } 
	f'(s) = \log\phi(s), 
$$
is nonincreasing along (smooth) solutions to \eqref{1.fp}.
The displacement convexity of equations related to \eqref{1.fp} was
analyzed in \cite{CLSS10}.
Our aim is to show that a discrete version of the entropy $\en_c$
is displacement convex along semidiscrete solutions associated to \eqref{1.fp}. 

For the discretization of \eqref{1.fp}, let $n\in\N$, $h=1/n>0$, and $x_i=ih$,
$i=0,\ldots,n$. Let $\rho_i(t)$ approximate the solution $\rho(x_i,t)$ and 
$w_i$ approximate the function $w(x_i)=e^{-V(x_i)}$. 
Writing \eqref{1.fp} in the form
$$
  \pa_t \rho = \diver\bigg(\phi(\rho)\na\log\frac{\phi(\rho)}{w}\bigg),
$$
a corresponding finite-difference scheme reads as
\begin{equation}\label{1.dfp}
  \pa_t \rho_i = 
	\frac{\kappa_i\Lambda_i}{h^2}\bigg(\log\frac{\phi(\rho_{i+1})}{w_{i+1}}
	-\log\frac{\phi(\rho_i)}{w_i}\bigg)
	- \frac{\kappa_{i-1}\Lambda_{i-1}}{h^2}\bigg(\log\frac{\phi(\rho_{i})}{w_{i}}
	-\log\frac{\phi(\rho_{i-1})}{w_{i-1}}\bigg),
\end{equation}
where $h>0$ is the space size 
and $\kappa_i\Lambda_i$ is an approximation of $\phi(\rho)$ in $[x_i,x_{i+1}]$.
Our idea is to employ the {\em modified} logarithmic mean
\begin{equation}\label{1.La}
  \Lambda_i = \frac{u_i-u_{i+1}}{\log u_i-\log u_{i+1}}
	= \frac{\phi(\rho_i)/w_i-\phi(\rho_{i+1})/w_{i+1}}{\log(\phi(\rho_i)/w_i)
	- \log(\phi(\rho_{i+1})/w_{i+1})},
\end{equation}
and to set, as in \cite{Mie13}, $\kappa_i=\sqrt{w_iw_{i+1}}$. 
Since $\Lambda_i$ approximates
$u_i=\phi(\rho_i)/w_i$, it follows that 
$\kappa_i\Lambda_i$ approximates $\sqrt{w_{i+1}/w_i}\phi(\rho_i)$. 
Observe that with this choice, the numerical scheme reduces to
$$
  \pa_t\rho_i = \frac{\kappa_i}{h^2}(u_{i+1}-u_i) 
	- \frac{\kappa_{i-1}}{h^2}(u_i-u_{i-1}), \quad u_i=\frac{\phi(\rho_i)}{w_i},
$$
which approximates \eqref{1.fp} written 
in the form $\pa_t\rho=\pa_x(w\pa_x(\phi(\rho)/w))$.

The main result of the paper is as follows. If $\phi$ is invertible
and $\phi'\circ\phi^{-1}$ is nonincreasing (an example is $\phi(s)=s^\alpha$
with $0<\alpha<1$), then the discrete entropy 
\begin{equation}\label{1.en}
  \en(\rho) 
	= \sum_{i=0}^n\big(f(\rho_i) + \rho_i V(x_i)\big), \quad\mbox{where } 
	f'(s) = \log\phi(s), 
\end{equation}
is displacement $\lambda_h$-convex with respect to the 
nonlocal transportation metric induced by \eqref{1.La}, where
$$
  \lambda_h = \gamma\bigg(\frac{2}{\gamma h^2}(1-e^{-\gamma h^2/2})
	\min_{i=0,\ldots,n}\phi'(\rho_i) 
	- 2\cosh(\gamma h)\max_{i=0,\ldots,n}|\na_h\phi'(\rho_i)|\bigg) \in\R,
$$
and $\na_h\phi'(\rho_i)=h^{-1}(\phi'(\rho_{i+1})-\phi'(\rho_i))$; see
Theorem \ref{thm.main}. If the minimum of $\phi'(\rho_i)$ is positive and 
the maximum of $|\na_h\phi'(\rho_i)|$ is sufficiently small,
then $\lambda_h$ is positive. Such bounds in terms of the initial data can be
shown at least for the case $V=0$; see Corollary \ref{coro2}.
We expect that exponential convergence to the steady state holds for
sufficiently small $h>0$ (and $V\neq 0$), but we are unable to prove it.
Our result is consistent with that one in \cite{Mie13}: If $\phi(s)=s$ is
linear (and $V\neq 0$), $\lambda_h\to \gamma$ as $h\to 0$, and the constant is 
asymptotically sharp. 

The paper is organized as follows. In Section \ref{sec.dc}, we introduce the
mathematical setting and give the definition of displacement $\lambda$-convexity.
We show that displacement $\lambda$-convexity follows if a certain matrix
is positive semidefinite, slightly generalizing Proposition 2.1 in \cite{Mie13}.
As a warm-up, we consider in Section \ref{sec.heat} the semidiscrete heat
equation and prove that the entropy $\en(\rho)=\sum_{i=0}^n f(\rho_i)$
is displacement convex if $f(s)=s(\log s-1)$ or $f(s)=s^\alpha$ for 
$1<\alpha\le 2$; see Theorem \ref{thm.heat}. This result
is a reformulation of Theorem 5 in \cite{JuYu15}, but our proof is very
simple. Section \ref{sec.nfp} is concerned with the proof of displacement
$\lambda$-convexity of \eqref{1.en} and contains our main result. 
Some properties of mean functions
are recalled in Appendix \ref{sec.mean}, and a priori estimates of solutions
to \eqref{1.dfp} with $V=0$ are proved in Appendix \ref{sec.apriori}.


\section{Displacement convexity}\label{sec.dc}

In this section, we specify our setting and give the definition of
displacement convexity. Let $n\in\N$ and introduce the finite state space
$$
  X_n = \bigg\{\rho=(\rho_0,\ldots,\rho_n)\in\R^{n+1}:\rho_0,\ldots,\rho_n>0,\ 
	\sum_{i=0}^n\rho_i=1\bigg\}.
$$
This space can be identified with the space of probability measures on a
$(n+1)$-point set. We define the inner product 
$\langle\rho,\rho^*\rangle=\sum_{i=1}^n\rho_i\rho^*_i$
for $\rho$, $\rho^*\in X_n$. Let a matrix $Q=(Q_{ij})\in\R^{(n+1)\times(n+1)}$
be given such that
$$
  Q_{ij}\ge 0\mbox{ for }i\neq j, \quad \sum_{i=0}^n Q_{ij}=0\mbox{ for }
	j=1,\ldots,n.
$$
The value $Q_{ij}$ is the rate of a particle moving from state $j$ to $i$.
We assume that there exists a unique vector $w\in X_n$ such that the
detailed balance condition
$$
  Q_{ij}w_j = Q_{ji}w_i\quad\mbox{for all }i,j=0,\ldots,n
$$
is satisfied. Summing this condition for fixed $i$ over $j=0,\ldots,n$,
we see that $Qw=0$. Note that in Markov chain theory, the detailed balance condition
is usually formulated for the transposed matrix $Q^\top$. 

Our aim is to show convexity properties of the entropy 
along solutions $t\mapsto \rho(t)$ to ODE systems of the type
\begin{equation}\label{2.ode}
  \pa_t\rho = Q\phi(\rho), \quad t>0,
\end{equation}
where $\phi$ is some smooth function. This equation can be formulated as
a gradient flow. Indeed, given a (smooth) function $f:[0,\infty)\to\R$, we define the entropy $\en:X_n\to\R$,
\begin{equation}\label{2.en}
  \en(\rho) = \sum_{i=0}^n f_i(\rho_i),   \quad\mbox{where }  
	f_i'(s) = f'\bigg(\frac{\phi(s)}{w_i}\bigg), 
\end{equation}
and the Onsager operator $K:X_n\to\R^{(n+1)\times(n+1)}$,
\begin{equation}\label{2.K}
  K(\rho) = \frac12\sum_{i,j=0}^n Q_{ij}w_j
	\Lambda^f\bigg(\frac{\phi(\rho_i)}{w_i},\frac{\phi(\rho_{i+1})}{w_i}\bigg)
	(e_i-e_j)\otimes(e_i-e_j),
\end{equation}
where $e_i=(\delta_{i0},\ldots,\delta_{in})^\top\in\R^{n+1}$ is the $i$th unit
vector and ``$\otimes$'' is the tensor product. 
By detailed balance and $Q_{ij}w_j\ge 0$ for $i\neq j$, it follows that 
$K(\rho)$ is symmetric and positive semidefinite.
With these definitions, we can formulate \eqref{2.ode} as a gradient system
in the sense that it can be rewritten as
\begin{equation}\label{eq.gf}
  \pa_t\rho = -K(\rho)D\en(\rho),
\end{equation}
where $D\en(\rho)=(f_0'(\rho_0),\ldots,f_n'(\rho_n))$.

The space $X_n$ is endowed with the nonlocal transportation distance
\begin{equation}\label{2.W}
  \W(\rho_0,\rho_1)^2 = \inf_{(\rho,\psi)\in E(\rho_0,\rho_1)}
	\int_0^1\big\langle K(\rho(t)),\psi(t),\psi(t)\big\rangle dt,
\end{equation}
where $E(\rho_0,\rho_1)$ is the set of pairs $(\rho(t),\psi(t))$, $t\in[0,1]$,
such that
\begin{align*}
  & \rho\in C^1([0,1];X_n), \ \psi:[0,1]\to\R^{n+1}\mbox{ is measurable}, \\
	& \mbox{for all }i=0,\ldots,n,\ t\in[0,1]: \ \pa_t\rho(t) = -K(\rho)\psi(t),
	\quad \rho(0)=\rho_0, \ \rho(1)=\rho_1.
\end{align*}
It is well known that the function ${\mathcal W}$ is a 
pseudo-metric on $X_n$ \cite[Theorem~1.1]{Maa11} and the pair $(X_n,{\mathcal W})$ 
defines a geodesic space \cite[Prop.~2.3]{ErMa14}, i.e., for all
$\rho_0$, $\rho_1\in X_n$, there exists at least one curve
$\rho:[0,1]\to X_n$, $t\mapsto\rho(t)$, such that $\rho(0)=\rho_0$,
$\rho(1)=\rho_1$, and $\W(\rho(s),\rho(t))=|s-t|\W(\rho_0,\rho_1)$ 
for all $s$, $t\in[0,1]$. Such a curve
is called a constant speed geodesics between $\rho_0$ and
$\rho_1$. If the pair $(\rho,\psi)\in E(\rho_0,\rho_1)$ 
attains the infimum in \eqref{2.W},
then it satisfies the geodesic equations \cite[Prop.~2.5]{ErMa14}
\begin{equation}\label{2.ge}
  \left\{    \begin{array}{l}
   \pa_t\rho = K(\rho)\psi,\\ 
   \pa_t\psi = -\frac12\langle DK(\rho)[\,\cdot\,]\psi,\psi\rangle
  \end{array}  \right. , \quad t>0,
\end{equation}
where the vector $b=\langle DK(\rho)[\,\cdot\,]\psi,\psi\rangle$ is defined
by $\langle b,v\rangle = \langle DK(\rho)[v]\psi,\psi\rangle$ for $v\in X_n$.

\begin{definition}[Displacement convexity]\label{def.dc}
Let $\lambda\in\R$.
We say that a functional $\E:X_n\to\R\cup\{+\infty\}$ is displacement
$\lambda$-convex on $X_n$ with respect to the metric ${\mathcal W}$
if for any constant speed geodesic curve $\rho:[0,1]\to X_n$,
$$
  \E(\rho(t)) \le (1-t)\E(\rho(0)) + t\E(\rho(1))
	- \frac{\lambda}{2} t(1-t){\mathcal W}(\rho(0),\rho(1))^2, \quad t\in[0,1].
$$
If $\lambda=0$, $\E$ is simply called displacement convex.
Moreover, if $t\mapsto\E(\rho(t))$ is twice differentiable, 
$\E$ is displacement $\lambda$-convex if and only if
$$
  \frac{d^2}{dt^2}\E(\rho(t))\ge \lambda{\mathcal W}(\rho(0),\rho(1))^2,
	\quad t\in[0,1].
$$
\end{definition}

We show that displacement $\lambda$-convexity of $\en$ is guaranteed if
a certain matrix is positive semidefinite. This result (slightly) generalizes
Proposition 2.1 in \cite{Mie13}.

\begin{proposition}\label{prop}
The entropy $\en$, defined in \eqref{2.en}, is displacement $\lambda$-convex
for some $\lambda\in\R$ if for any $\rho\in X_n$, 
\begin{equation}\label{2.MK}
  M(\rho)\ge \lambda K(\rho),
\end{equation}
i.e.\ if $M(\rho)-\lambda K(\rho)$ is positive semidefinite, where
\begin{equation}\label{2.M}
  M(\rho)=\frac12(DK(\rho)[Q\phi(\rho)] - Q\Phi'(\rho)K(\rho) 
  - K(\rho)\Phi'(\rho)Q^\top)
\end{equation}
and $\Phi'(\rho)=\diag(\phi'(\rho_1),\ldots,\phi'(\rho_n))$.
\end{proposition}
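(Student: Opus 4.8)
The plan is to verify the second-order characterization of displacement $\lambda$-convexity stated in Definition~\ref{def.dc}. Fix a constant speed geodesic $\rho:[0,1]\to X_n$ and an associated momentum $\psi:[0,1]\to\R^{n+1}$, so that the pair $(\rho,\psi)$ solves the geodesic system \eqref{2.ge}. By the defining property of constant speed geodesics, the quantity $\langle K(\rho(t))\psi(t),\psi(t)\rangle$ is independent of $t$ and equals $\W(\rho(0),\rho(1))^2$. Hence, in view of Definition~\ref{def.dc}, it suffices to prove the pointwise identity
$$
  \frac{d^2}{dt^2}\en(\rho(t)) = \big\langle M(\rho(t))\psi(t),\psi(t)\big\rangle, \qquad t\in[0,1],
$$
because then \eqref{2.MK} yields $\frac{d^2}{dt^2}\en(\rho(t))\ge\lambda\langle K(\rho(t))\psi(t),\psi(t)\rangle=\lambda\W(\rho(0),\rho(1))^2$, which is exactly what displacement $\lambda$-convexity requires.

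To obtain this identity I would differentiate $t\mapsto\en(\rho(t))$ twice. For the first derivative, $\frac{d}{dt}\en(\rho)=\langle D\en(\rho),\pa_t\rho\rangle=\langle D\en(\rho),K(\rho)\psi\rangle$; using that $K(\rho)$ is symmetric together with the pointwise identity $K(\rho)D\en(\rho)=-Q\phi(\rho)$ (which is precisely the statement that \eqref{2.ode} and \eqref{eq.gf} coincide), this equals $-\langle Q\phi(\rho),\psi\rangle$. Differentiating once more, the contribution of $\pa_t\big(Q\phi(\rho)\big)=Q\Phi'(\rho)\pa_t\rho$ is rewritten with the first geodesic equation $\pa_t\rho=K(\rho)\psi$, while the term that pairs $Q\phi(\rho)$ with $\pa_t\psi$ is rewritten with the second geodesic equation $\pa_t\psi=-\frac12\langle DK(\rho)[\,\cdot\,]\psi,\psi\rangle$, leading to
$$
  \frac{d^2}{dt^2}\en(\rho)=-\big\langle Q\Phi'(\rho)K(\rho)\psi,\psi\big\rangle
  +\tfrac12\big\langle Q\phi(\rho),\langle DK(\rho)[\,\cdot\,]\psi,\psi\rangle\big\rangle.
$$
In the last term I would use the definition of the vector $b=\langle DK(\rho)[\,\cdot\,]\psi,\psi\rangle$, namely $\langle b,v\rangle=\langle DK(\rho)[v]\psi,\psi\rangle$, to rewrite it as $\tfrac12\langle DK(\rho)[Q\phi(\rho)]\psi,\psi\rangle$; and in the first term I would symmetrize, using that $K(\rho)$ and $\Phi'(\rho)$ are symmetric, to get $\langle Q\Phi'(\rho)K(\rho)\psi,\psi\rangle=\tfrac12\langle(Q\Phi'(\rho)K(\rho)+K(\rho)\Phi'(\rho)Q^\top)\psi,\psi\rangle$. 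Substituting both reproduces exactly $\langle M(\rho)\psi,\psi\rangle$ with $M(\rho)$ as in \eqref{2.M}, and the proof is finished by combining with \eqref{2.MK} as explained above.

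I do not expect a deep obstacle here; this is essentially the nonlinear extension of Mielke's Proposition~2.1 in \cite{Mie13}, the new feature being the factors $\Phi'(\rho)$ produced by $\frac{d}{dt}\phi(\rho)=\Phi'(\rho)\pa_t\rho$, and the main effort is the correct bookkeeping of signs and the symmetrizations just described. The one point deserving care is the twice differentiability of $t\mapsto\en(\rho(t))$ presupposed in Definition~\ref{def.dc}: a constant speed geodesic exists as a $C^1$ curve in $X_n$ and a minimizer satisfies the ODE system \eqref{2.ge} (\cite[Props.~2.3 and 2.5]{ErMa14}), whose right-hand side is smooth on $X_n$ because $\Lambda^f$ is smooth on $(0,\infty)^2$ by \eqref{1.Laf} and $\phi$ is smooth on $(0,\infty)$; a standard bootstrap then makes $t\mapsto\rho(t)$, hence $t\mapsto\en(\rho(t))$, of class $C^\infty$.
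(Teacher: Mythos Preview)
Your proof is correct and follows essentially the same route as the paper: compute the second derivative of $\en$ along a geodesic using the geodesic equations \eqref{2.ge} and the gradient flow identity $K(\rho)D\en(\rho)=-Q\phi(\rho)$, then symmetrize to obtain $\langle M(\rho)\psi,\psi\rangle$. Your minor variation---applying the gradient flow identity already at the level of the first derivative, $\frac{d}{dt}\en(\rho)=-\langle Q\phi(\rho),\psi\rangle$---is slightly more direct than the paper's, which differentiates $\langle D\en(\rho),K(\rho)\psi\rangle$ and only afterwards eliminates the Hessian $D^2\en(\rho)$ via the differentiated identity $K(\rho)D^2\en(\rho)+DK(\rho)[\,\cdot\,]D\en(\rho)=-Q\Phi'(\rho)$.
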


\begin{proof}
Let $\rho_0$, $\rho_1\in X_n$ and let $\rho:[0,1]\to X_n$
be a geodesic curve with $(\rho,\psi)\in E(\rho_0,\rho_1)$.
Then $(\rho,\psi)$ satisfies the geodesic equations \eqref{2.ge}, implying that
$$
  \frac{d}{dt}\en(\rho) = \langle D\en(\rho),\pa_t\rho\rangle
	= \langle D\en(\rho),K(\rho)\psi\rangle.
$$
Differentiating a second time and using the symmetry of $K(\rho)$ and
$DK(\rho)[\pa_t\rho]$, we find that
\begin{align*}
  \frac{d^2}{dt^2}\en(\rho) 
	&= \langle D^2\en(\rho)\pa_t\rho,K(\rho)\psi\rangle
	+ \langle D\en(\rho),DK(\rho)[\pa_t\rho]\psi\rangle
	+ \langle D\en(\rho),K(\rho)\pa_t\psi\rangle \\
	&= \langle K(\rho)D^2\en(\rho)\pa_t\rho,\psi\rangle
	+ \langle DK(\rho)[\pa_t\rho]D\en(\rho),\psi\rangle
	+ \langle K(\rho)D\en(\rho),\pa_t\psi\rangle.
\end{align*}
Inserting the geodesic equations \eqref{2.ge} yields
\begin{align}
  \frac{d^2}{dt^2}\en(\rho) 
	&= \big\langle K(\rho)D^2\en(\rho)K(\rho)\psi + DK(\rho)[K(\rho)\psi]D\en(\rho),
	\psi\big\rangle \nonumber \\
	&\phantom{xx}{}
	- \frac12\big\langle DK(\rho)[K(\rho)D\en(\rho)]\psi,\psi\big\rangle. \label{2.aux}
\end{align}
We differentiate $K(\rho)D\en(\rho) = -Q\phi(\rho)$ with respect to $\rho$:
$$
  K(\rho)D^2\en(\rho) + DK(\rho)[\,\cdot\,]D\en(\rho) = -Q\Phi'(\rho).
$$
Thus, we can replace the first bracket on the right-hand side of \eqref{2.aux}
by $-Q\Phi'(\rho)K(\rho)\psi$:
\begin{align}
  \frac{d^2}{dt^2}\en(\rho) 
	&= -\langle Q\Phi'(\rho)K(\rho)\psi,\psi\rangle 
	+ \frac12\big\langle DK(\rho)[Q\phi(\rho)]\psi,\psi\big\rangle \nonumber \\
  &= \frac12\big\langle\big(DK(\rho)[Q\phi(\rho)]
	- Q\Phi'(\rho)K(\rho) - K(\rho)\Phi'(\rho)Q^\top\big)\psi,\psi\big\rangle.
	\label{2.d2F}
\end{align}
We infer from \eqref{2.MK} that
$$
  \frac{d^2}{dt^2}\en(\rho) \ge \lambda\langle K(\rho)\psi,\psi\rangle
$$
for all geodesic curves $\rho$ and vector fields $\psi$ such that
$(\rho,\psi)\in E(\rho_0,\rho_1)$. Consequently,
$$
  \frac{d^2}{dt^2}\en(\rho(t))\ge \lambda{\mathcal W}(\rho_0,\rho_1)^2,
	\quad t\in[0,1],
$$
and by Definition \ref{def.dc}, $\en$ is displacement $\lambda$-convex.
\end{proof}


\section{Semidiscrete heat equation}\label{sec.heat}

As a warm-up, we consider the semidiscrete heat equation
\begin{equation}\label{3.eq}
  \pa_t \rho_i = h^{-2}(\rho_{i-1} - 2\rho_i + \rho_{i+1}),
	\quad i=0,\ldots,n,\ t>0,
\end{equation}
where $n\in\N$ and $h=1/n>0$.
The no-flux boundary conditions are realized by setting $\rho_{-1}=\rho_0$ and
$\rho_{n+1}=\rho_n$. We write $\rho=(\rho_0,\ldots,\rho_n)$.
Equation \eqref{3.eq} can be written as \eqref{2.ode} by setting $\phi(s)=s$ and
$Q=-G^\top G$ with the discrete gradient $G\in\R^{(n+1)\times(n+1)}$,
$G_{ij}=h^{-1}(\delta_{ij}-\delta_{i+1,j})$. By slightly abusing the notation,
we set $w_i=1$ for $i=0,\ldots,n$ and note that for a function $f:[0,\infty)\to\R$ , the corresponding entropy given in \eqref{2.en} reduces to
\begin{equation}\label{3.en}
\en(\rho) = \sum_{i=0}^n f(\rho_i).
\end{equation}
Then, for the respective Onsager operator given in \eqref{2.K} with the mean function $\Lambda^f$, we claim that the entropy $\en$ is displacement convex, under suitable conditions on $f$. 

\begin{theorem}\label{thm.heat}
Let $f$ be such that $\Lambda^f$, defined in \eqref{1.Laf}, is concave in both variables. Then 
the entropy \eqref{3.en} is displacement convex with respect to the metric
\eqref{2.W} induced by $\Lambda^f$.
\end{theorem}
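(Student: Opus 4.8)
The plan is to deduce the theorem from Proposition \ref{prop}. Since here $\phi(s)=s$, the matrix $\Phi'(\rho)$ in \eqref{2.M} is the identity and $Q=-G^\top G$ is symmetric, so $M(\rho)$ collapses to $M(\rho)=\tfrac12\big(DK(\rho)[Q\rho]-QK(\rho)-K(\rho)Q\big)$, and by Proposition \ref{prop} with $\lambda=0$ it suffices to show that $M(\rho)$ is positive semidefinite for every $\rho\in X_n$; then $\en$ is displacement convex. I would first write $K(\rho)$ in edge form, $K(\rho)=h^{-2}\sum_k\Lambda_k\,(e_k-e_{k+1})\otimes(e_k-e_{k+1})$ with $\Lambda_k=\Lambda^f(\rho_k,\rho_{k+1})\ge 0$, and, using $Q=-G^\top G$ together with the symmetry of $K$ and $Q$, rewrite for an arbitrary test vector $\psi$
$$
  \langle M(\rho)\psi,\psi\rangle=\tfrac12\big\langle DK(\rho)[Q\rho]\psi,\psi\big\rangle+\big\langle GK(\rho)\psi,G\psi\big\rangle .
$$

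The heart of the argument is a discrete Bochner-type estimate of the first term coming from the concavity of $\Lambda^f$; this is the discrete counterpart of McCann's displacement-convexity condition for the continuous heat flow. Writing $r_k=\rho_{k+1}-\rho_k$, so that $(Q\rho)_k=h^{-2}(r_k-r_{k-1})$, the first term equals
$$
  \tfrac12 h^{-4}\sum_k\Big(\pa_1\Lambda^f(\rho_k,\rho_{k+1})(r_k-r_{k-1})+\pa_2\Lambda^f(\rho_k,\rho_{k+1})(r_{k+1}-r_k)\Big)(\psi_k-\psi_{k+1})^2 .
$$
Applying the tangent-plane inequality for the concave function $\Lambda^f$ with base point $(\rho_k,\rho_{k+1})$, once evaluated at the argument $(\rho_{k+1},\rho_{k+2})$ and once at $(\rho_{k-1},\rho_k)$, and adding the two resulting inequalities, I would get the pointwise lower bound
$$
  \pa_1\Lambda^f(\rho_k,\rho_{k+1})(r_k-r_{k-1})+\pa_2\Lambda^f(\rho_k,\rho_{k+1})(r_{k+1}-r_k)\ \ge\ \Lambda_{k+1}-2\Lambda_k+\Lambda_{k-1}.
$$
Since $(\psi_k-\psi_{k+1})^2\ge 0$, this bounds the first term below by $\tfrac12 h^{-4}\sum_k(\Lambda_{k+1}-2\Lambda_k+\Lambda_{k-1})(\psi_k-\psi_{k+1})^2$.

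It then remains to add the gradient term and rearrange. With $a_k:=\psi_{k+1}-\psi_k$, a direct computation gives $\langle GK(\rho)\psi,G\psi\rangle=h^{-4}\sum_k a_k\big(2\Lambda_k a_k-\Lambda_{k-1}a_{k-1}-\Lambda_{k+1}a_{k+1}\big)$. Shifting indices so as to collect, in the sum of the two pieces, the coefficient multiplying each $\Lambda_k$, that coefficient turns out to equal exactly $\tfrac12(a_k-a_{k-1})^2+\tfrac12(a_{k+1}-a_k)^2$, a nonnegative combination of the discrete second differences of $\psi$. Since $\Lambda_k\ge 0$, this gives $\langle M(\rho)\psi,\psi\rangle\ge 0$, which is what we need. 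The no-flux boundary conditions $\rho_{-1}=\rho_0$, $\rho_{n+1}=\rho_n$ (and accordingly $\psi_{-1}=\psi_0$, $\psi_{n+1}=\psi_n$, so that $r_{-1}=r_n=0$ and $a_{-1}=a_n=0$) make all index shifts close up, and the only surviving boundary contributions carry a factor $\Lambda\ge 0$ and therefore only help.

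I expect the real obstacle to be the key step: guessing the two concavity comparisons that turn $DK(\rho)[Q\rho]$ into the clean discrete second difference $\Lambda_{k+1}-2\Lambda_k+\Lambda_{k-1}$, and then noticing that what remains after adding $\langle GK(\rho)\psi,G\psi\rangle$ reassembles into perfect squares. Everything else is bookkeeping, including the verification that $\Lambda^f$ is concave for $f(s)=s(\log s-1)$ (where $\Lambda^f$ is the logarithmic mean, cf.\ Lemma \ref{lem.Lambda}) and for $f(s)=s^\alpha$ with $1<\alpha\le 2$, which is a routine estimate on a $2\times 2$ Hessian and plays no role in the convexity argument proper.
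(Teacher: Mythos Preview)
Your proof is correct and rests on the same key idea as the paper's: after invoking Proposition~\ref{prop} with $\lambda=0$, everything reduces to the concavity inequality for $\Lambda^f$, i.e., the tangent-plane estimate you apply at $(\rho_k,\rho_{k+1})$ with the two neighbouring arguments. This is exactly Lemma~\ref{lem.second} in the paper, and the paper's diagonal-dominance condition $a_i+b_{i-1}+b_i\ge 0$ is, term for term, the same inequality.

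Where you and the paper differ is in the packaging of the final step. The paper first factors $K(\rho)=G^\top L(\rho)G$ to pass to a reduced symmetric tridiagonal matrix $\widetilde M$ acting on $G\psi$, and then shows $\widetilde M$ is diagonally dominant (using that the off-diagonal entries $b_i=-(\Lambda_i+\Lambda_{i+1})$ are nonpositive). This is a per-row criterion, so the boundary rows are handled in one line each and no index-shifting bookkeeping is required. You instead keep the quadratic form, lower-bound the $DK(\rho)[Q\rho]$ piece by the discrete second difference $\Lambda_{k+1}-2\Lambda_k+\Lambda_{k-1}$, and then verify that adding $\langle GK(\rho)\psi,G\psi\rangle$ makes every $\Lambda_k$ carry the nonnegative coefficient $\tfrac12(a_k-a_{k-1})^2+\tfrac12(a_{k+1}-a_k)^2$. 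Your sum-of-squares identity is slightly more informative (it exhibits the defect explicitly), but at the cost of tracking the boundary contributions by hand; your observation that the leftover boundary pieces are $\tfrac12 a_0^2\Lambda_{-1}$ and $\tfrac12 a_{n-1}^2\Lambda_n$ (plus favourable extra $\tfrac12 a_0^2$, $\tfrac12 a_{n-1}^2$ terms in the edge coefficients), all with nonnegative sign, is correct. One small caution: $\psi$ is an arbitrary test vector, so ``$\psi_{-1}=\psi_0$'' is not a boundary condition but a convention you are free to adopt because the phantom edges carry zero weight in the second sum; it would be worth saying this explicitly.
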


If $f(s)=s(\log s-1)$ or $f(s)=s^\alpha$ for $1<\alpha\le 2$, $\Lambda$ is
concave in both variables (see Lemma \ref{lem.mean}), thus fulfilling the
assumption of the theorem.

\begin{proof}
We formulate $Q\rho=-G^\top G\rho=-G^\top L(\rho)G f'(\rho)$,
where $L(\rho)=\diag(\Lambda^f(\rho_i,\rho_{i+1}))_{i=0}^n$
and $f'(\rho)=(f'(\rho_i))_{i=0}^n$. Then, setting
$K(\rho)=G^\top L(\rho)G$, we can write \eqref{3.eq} as the gradient system
$$
  \pa_t\rho = Q\rho = -K(\rho)D\en(\rho),
$$
where we identify $D\en(\rho)$ with $f'(\rho)$. Thus, by 
Proposition \ref{prop}, it is sufficient to show that the matrix $M(\rho)$,
defined in \eqref{2.M}, is positive semidefinite. In fact, because of the
special structure of $K(\rho)$, we can simplify this condition. 
Let $\psi\in\R^{n+1}$. Then, using $DK(\rho)[\,\cdot\,]=G^\top DL(\rho)[\,\cdot\,]G$
and $Q=-G^\top G$,
\begin{align*}
  \langle M(\rho)\psi,\psi\rangle
	&= \frac12\Big\langle \big(DK(\rho)[Q\rho] - QK(\rho) - K(\rho)Q^\top\big)\psi,\psi
	\Big\rangle \\
	&= \frac12\Big\langle G^\top\big(DL(\rho)[Q\rho]G + GG^\top L(\rho)G
	+ L(\rho)GG^\top G\big)\psi,\psi\Big\rangle \\
	&=  \frac12\Big\langle \big(DL(\rho)[Q\rho] + GG^\top L(\rho) 
	+ L(\rho)GG^\top \big)G\psi,G\psi\Big\rangle.
\end{align*}
Hence, it is sufficient to show that
$$
  \widetilde M := -DL(\rho)[G^\top G\rho] + GG^\top L(\rho) + L(\rho)GG^\top
$$
is positive semidefinite. 

We show this claim by verifying that $\widetilde M$ is
diagonally dominant. To this end, we observe that $\widetilde M$ is a symmetric
tridiagonal matrix with entries
$$
  \widetilde M = \frac{1}{h^2}\begin{pmatrix} 
	a_1 & b_1 & 0   & \cdots & 0 \\
	b_1 & a_2 & b_2 & \ddots & \vdots \\
	0   & b_2 & \ddots &     & 0 \\
	\vdots & \ddots &  & a_{n-1} & b_{n-1} \\
	0   & \cdots & 0 & b_{n-1}   & a_n 
	\end{pmatrix},
$$
where the coefficients are given by
\begin{align*}
  a_i &= 4\Lambda^f(\rho_i,\rho_{i+1}) 
	- \pa_1\Lambda^f(\rho_i,\rho_{i+1})(2\rho_i-\rho_{i-1}-\rho_{i+1})\\
	&\phantom{xx}{}
  - \pa_2\Lambda^f(\rho_i,\rho_{i+1})(2\rho_{i+1}-\rho_{i}-\rho_{i+2}), \\
	b_i &= -\big(\Lambda^f(\rho_i,\rho_{i+1})+\Lambda^f(\rho_{i+1},\rho_{i+2})\big) 
	\le 0,
	\quad i=1,\ldots,n.
\end{align*}
We also set $b_0=-\Lambda^f(\rho_0,\rho_1)-\Lambda^f(\rho_1,\rho_2)\le 0$.

The matrix $\widetilde M$ is diagonally dominant if
\begin{align}
  & a_1+b_1 \ge 0, \quad a_n + b_{n-1} \ge 0, \label{3.dd1} \\
	& a_i+b_{i-1}+b_i \ge 0\quad\mbox{for }i=1,\ldots,n. \label{3.dd2} 
\end{align}
The first two conditions \eqref{3.dd1} follow from \eqref{3.dd2} for $i=1$ and
$i=n$, since
$a_1+b_1=(a_1+b_0+b_1)-b_0\ge a_1+b_0+b_1\ge 0$ and
$a_n+b_{n-1}=(a_n+b_{n-1}+b_n)-b_n\ge a_n+b_{n-1}+b_n\ge 0$.
Thus, it remains to prove \eqref{3.dd2}. We compute
\begin{align*}
  a_i+{} & b_{i-1}+b_i
	= 2\Lambda^f(\rho_i,\rho_{i+1}) - \Lambda^f(\rho_{i+1},\rho_{i+2})
	- \Lambda^f(\rho_{i-1},\rho_{i}) \\
	&{}- \pa_1\Lambda^f(\rho_i,\rho_{i+1})
	\big(2\rho_i-\rho_{i-1}-\rho_{i+1}\big) 
	- \pa_2\Lambda^f(\rho_i,\rho_{i+1})\big(2\rho_{i+1}-\rho_{i}-\rho_{i+2}\big).
\end{align*}
Since $\Lambda^f$ is assumed to be concave, we may apply 
Lemma \ref{lem.second}, which shows that this expression is nonnegative,
and hence, $\widetilde M$ is positive semidefinite.
\end{proof}

For nonlinear functions $\phi$ and nonconstant steady states $(w_i)$, 
the proof of nonnegativity of $a_i+b_{i-1}+b_i$ is,
unfortunately, not as simple as above, and we need more properties of the mean 
function. It turns out that the logarithmic mean satisfies these properties.
Such a situation is considered in the next section.


\section{Semidiscrete nonlinear Fokker-Planck equations}\label{sec.nfp}

We discretize the nonlinear Fokker-Planck equation
$$
  \pa_t\rho = \diver(\na\phi(\rho)-\phi(\rho)\na V) 
	= \diver\bigg(w\na\frac{\phi(\rho)}{w}\bigg),
$$
where $w(x)=e^{-V(x)}$. We choose the quadratic potential
$V(x)=\gamma|x|^2/2$ with $\gamma>0$ but other choices are possible.
Let $n\in\N$, $h=1/n>0$, and $x_i=ih$.
Approximating $\rho(x_i,t)$ by $\rho_i(t)$, $w(x_i)$ by $w_i$ and setting
$u_i=\phi(\rho_i)/w_i$, the numerical scheme reads as
\begin{equation}\label{4.ns}
  \pa_t\rho_i = h^{-2}\kappa_i(u_{i+1}-u_i) - h^{-2}\kappa_i(u_i-u_{i-1}),
\end{equation}
where $\kappa_i=\sqrt{w_iw_{i+1}}$ approximates $w(x_{i+1/2})$.
The no-flux boundary conditions are realized by $u_{-1}=u_0$ and $u_{n+1}=u_n$.
Setting $Q=G^\top\diag(\kappa_i)G\diag(w_i^{-1})$ and, slightly abusing the
notation, $\rho=(\rho_0,\ldots,\rho_n)$, we see that the scheme can be formulated
as $\pa_t\rho=Q\phi(\rho)$, and thus, the framework of Section \ref{sec.dc} 
applies. Hence, \eqref{4.ns} can be written as the gradient system
$$
  \pa_t\rho = -K(\rho)\log u, \quad K(\rho)=G^\top L(\rho)G,
$$
where $\log u=(\log u_i)_{i=0}^n$,
$$
  L(\rho) = \diag\big(\kappa_i\Lambda(u_i,u_{i+1})\big)_{i=0}^n, \quad
	u_i = \frac{\phi(\rho_i)}{w_i},
$$
and $\Lambda$ is the logarithmic mean. The above system can be written as in 
\eqref{eq.gf} by chosing $f(s)=s(\log s-1)$, and therefore, by \eqref{2.en}, 
the entropy reads as
$$
  \en(\rho) = \sum_{i=0}^n\bigg(f(\rho_i) + \frac{\gamma}{2}x_i^2\rho_i\bigg),
$$
since
\[f_i'(s) = f'\left(\frac{\phi(s)}{w_i}\right)=\log \phi(s) -\log w_i = f'(s)+\frac{\gamma}{2}x_i^2, \qquad i=0,\ldots n. \]
Thus, $D\en(\rho)=\log u$ and, for the nonlocal transportation metric $\W$ defined in \eqref{2.W}, we have the following result.

\begin{theorem}\label{thm.main}
Let $\phi$ be invertible, $\phi'\circ\phi^{-1}$ be nonincreasing, and $\gamma>0$. 
Then the entropy $\en$ is displacement $\lambda_h$-convex with respect to $\W$,
where
$$
  \lambda_h = \gamma\bigg(\frac{2}{\gamma h^2}(1-e^{-\gamma h^2/2})
	\min_{i=0,\ldots,n}\phi'(\rho_i) 
	- 2\cosh(\gamma h)\max_{i=0,\ldots,n}|\na_h\phi'(\rho_i)|\bigg) \in\R.
$$
If $\phi(s)=s$, we have $\lambda_h=(2/h^{2})(1-e^{-\gamma h^2/2})\to \gamma$ 
as $h\to 0$.
\end{theorem}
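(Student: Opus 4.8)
The plan is to apply Proposition~\ref{prop}: since \eqref{4.ns} is the gradient flow \eqref{eq.gf} with $D\en(\rho)=\log u$, it suffices to establish the matrix inequality $M(\rho)\ge\lambda_h K(\rho)$ for every $\rho\in X_n$, with $M(\rho)$ as in \eqref{2.M} and $\Phi'(\rho)=\diag(\phi'(\rho_i))$ (the dependence of $\lambda_h$ on the configuration being handled, as usual, by passing to the infimum along the geodesic, which can be made quantitative via a priori bounds as in Corollary~\ref{coro2}). First I would reduce this matrix inequality to a scalar, tridiagonal one, following the pattern of the proof of Theorem~\ref{thm.heat}. Putting $\eta:=G\psi$ and using the factorizations $K(\rho)=G^\top L(\rho)G$, $DK(\rho)[\,\cdot\,]=G^\top DL(\rho)[\,\cdot\,]G$, and the fact that $Q$ and $Q^\top$ both factor through the discrete gradient $G$, one checks that in the quadratic form only $\eta$ enters, that the two cross terms $\langle Q\Phi'(\rho)K(\rho)\psi,\psi\rangle$ and $\langle K(\rho)\Phi'(\rho)Q^\top\psi,\psi\rangle$ coincide, and hence
\[
  \langle M(\rho)\psi,\psi\rangle-\lambda_h\langle K(\rho)\psi,\psi\rangle
  = \frac12\big\langle\big(\widetilde M-2\lambda_h L(\rho)\big)\eta,\eta\big\rangle,
\]
where $\widetilde M=DL(\rho)[Q\phi(\rho)]-(N+N^\top)$ and $N$ gathers the contribution of the cross terms. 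As in Section~\ref{sec.heat}, $DL(\rho)[Q\phi(\rho)]$ is diagonal and $N+N^\top$ is symmetric \emph{tridiagonal} (it is built from the variable-coefficient second-difference operator $G\diag(w_i^{-1})\Phi'(\rho)G^\top$ flanked by the diagonal matrices $\diag(\kappa_i)$ and $L(\rho)$), so $\widetilde M$ is symmetric tridiagonal, and the theorem reduces to showing that $\widetilde M-2\lambda_h L(\rho)$ is positive semidefinite.

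This I would prove by diagonal dominance, as in Section~\ref{sec.heat}. Writing $A_i$, $B_i$ for the diagonal and off-diagonal entries of $\widetilde M$ and $\ell_i=\kappa_i\Lambda(u_i,u_{i+1})$ for the diagonal entries of $L(\rho)$, it suffices to verify the interior inequalities $A_i+B_{i-1}+B_i\ge 2\lambda_h\,\ell_i$ (the two endpoint inequalities follow from these together with the no-flux convention $u_{-1}=u_0$, $u_{n+1}=u_n$, which forces the relevant off-diagonal entries to be $\le0$, exactly as in the proof of Theorem~\ref{thm.heat}). Expanding $A_i$ produces the terms $\pa_1\Lambda(u_i,u_{i+1})\,\phi'(\rho_i)/w_i$ and $\pa_2\Lambda(u_i,u_{i+1})\,\phi'(\rho_{i+1})/w_{i+1}$ coming from $DL(\rho)[Q\phi(\rho)]$, together with the tridiagonal entries from $N+N^\top$. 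Here I would substitute $w_i=e^{-\gamma x_i^2/2}$, $\kappa_i=\sqrt{w_iw_{i+1}}$ and invoke the properties of the logarithmic mean recalled in Lemma~\ref{lem.Lambda}: the homogeneity identity $s\,\pa_1\Lambda(s,t)+t\,\pa_2\Lambda(s,t)=\Lambda(s,t)$, the concavity of $\Lambda$, and the two-point second-difference estimate of Lemma~\ref{lem.second}. The hypotheses that $\phi$ is invertible and that $\phi'\circ\phi^{-1}$ is nonincreasing enter precisely here: they let one rewrite the node-to-node variation of $\phi'(\rho_i)$ through $\na_h\phi'(\rho_i)$ and control the sign of the mixed terms, leaving a ``good'' remainder bounded below by $\min_i\phi'(\rho_i)$ times a positive mean-function expression. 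Collecting the constants, the exponential weights contribute, after elementary estimates for $x_i\in[0,1]$, the factors $\frac{2}{\gamma h^2}(1-e^{-\gamma h^2/2})$ and $2\cosh(\gamma h)$, which is exactly the claimed $\lambda_h$.

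The main obstacle is this last estimate — extracting the essentially sharp constant from the diagonal-dominance inequality. In the heat-equation warm-up, concavity of $\Lambda^f$ alone closed the argument via Lemma~\ref{lem.second}; here two competing effects must be controlled at once: the confinement weights $w_i,\kappa_i$, whose discrete second differences generate the curvature contribution $\gamma$, and the nonlinearity $\phi$, whose variation across neighbouring nodes generates the corrective term involving $\cosh(\gamma h)\max_i|\na_h\phi'(\rho_i)|$. Keeping these separated while using the monotonicity of $\phi'\circ\phi^{-1}$ to sign the cross contributions is the delicate point, and it is precisely where the logarithmic mean — rather than a general mean function — is needed. Finally, the linear case is a consistency check: if $\phi(s)=s$ then $\phi'\equiv1$, so $\min_i\phi'(\rho_i)=1$ and $\na_h\phi'(\rho_i)=0$, giving $\lambda_h=(2/h^2)(1-e^{-\gamma h^2/2})$, and Taylor-expanding $1-e^{-\gamma h^2/2}$ shows $\lambda_h\to\gamma$ as $h\to0$, recovering the bound of \cite{Mie13}.
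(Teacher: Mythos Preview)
Your overall strategy coincides with the paper's: reduce via Proposition~\ref{prop} to a matrix inequality, factor through $G$ to obtain a symmetric tridiagonal $\widetilde M$, and verify diagonal dominance of $\widetilde M-\lambda_h L(\rho)$ (your factor $2\lambda_h$ is in fact the algebraically consistent choice; the paper's text carries a harmless factor-of-two slip here). Where your proposal diverges from the paper is in the toolkit for the scalar inequality $a_i+b_{i-1}+b_i\ge\lambda_h h^2\kappa_i\Lambda_i$, and this is a genuine gap.

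You propose to close it with Euler homogeneity $s\,\pa_1\Lambda+t\,\pa_2\Lambda=\Lambda$, concavity of $\Lambda$, and Lemma~\ref{lem.second}. That combination succeeded in Section~\ref{sec.heat} only because all coefficients were equal: the three mean values $\Lambda_{i-1},\Lambda_i,\Lambda_{i+1}$ entered with unit weights and could be packaged as a single second difference of $\Lambda$. In the present situation the expansion of $a_i+b_{i-1}+b_i$ (see the paper's display \eqref{4.aux1}) carries the variable weights $\alpha_i=\kappa_i\phi'(\rho_i)/w_i$, $\beta_i=\kappa_i\phi'(\rho_{i+1})/w_{i+1}$, and the off-centre means appear as $\beta_{i-1}\Lambda_{i-1}$ and $\alpha_{i+1}\Lambda_{i+1}$. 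There is no way to regroup these into a second difference to which Lemma~\ref{lem.second} applies, and concavity alone does not control $\Lambda_{i\pm1}$ against the partial derivatives $\pa_j\Lambda_i$ evaluated at the \emph{central} pair $(u_i,u_{i+1})$. The paper's argument instead uses the sharp optimisation identity (iv) of Lemma~\ref{lem.Lambda},
\[
  \max_{r\ge 0}\bigl(\Lambda(r,t)-\pa_1\Lambda(t,s)\,r\bigr)=t\,\pa_2\Lambda(t,s),
\]
to eliminate $\Lambda_{i-1}$ and $\Lambda_{i+1}$ in favour of $u_{i-1},u_{i+2}$, property~(ii) to rewrite $\pa_j\Lambda_i(u_j-u_{j\pm1})$, property~(iii) to collapse $\pa_1\Lambda_i+\pa_2\Lambda_i$, and finally property~(v) to extract the positive lower bound. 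None of (iv) or (v) follows from concavity or homogeneity; they are specific to the logarithmic mean, which is why the paper singles it out. Your outline is right in spirit, but to carry it through you must replace the appeal to Lemma~\ref{lem.second} by this sequence of logarithmic-mean identities.
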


From numerical analysis, we expect that $\min_{i=0,\ldots,n}\phi'(\rho_i)$ and
$\max_{i=0,\ldots,n}|\na_h\phi'(\rho_i)|$ are independent of $h$ and bounded
only by discrete norms of $\rho(0)$. In Appendix \ref{sec.apriori}, we provide
such estimates for the case $V=0$. These estimates show that $\lambda_h$ is positive
if $\max_i|\na_h\rho_i(0)|$ is sufficiently small. The function
$\phi(s)=s^\alpha$ satisfies the assumptions of the theorem if $0<\alpha\le 1$.
In the linear case $\phi(s)=s$, we recover essentially the result of \cite{Mie13}.

\begin{proof}
According to Proposition \ref{prop}, it is sufficient to show that the matrix
$M(\rho)-\lambda_h K(\rho)$ is positive semidefinite. 
The derivative of $K(\rho)$ becomes
$DK(\rho[\,\cdot\,]=G^\top DL(\rho)[\,\cdot\,]G$ and
$$
  (DL(\rho)[\xi])_i = \kappa_i\pa_1\Lambda(u_i,u_{i+1})\frac{\phi'(\rho_i)}{w_i}\xi_i
	+ \kappa_i\pa_2\Lambda(u_i,u_{i+1})\frac{\phi'(\rho_{i+1})}{w_{i+1}}\xi_{i+1}
$$
for $i=0,\ldots,n$ and $\xi\in\R^{n+1}$. Therefore, for $\psi\in\R^{n+1}$,
\begin{align*}
  \langle M(\rho)\psi,\psi\rangle
	&= \frac12\big\langle G^\top\big\{DL(\rho)[Q\phi(\rho)]G
	+ Q\Phi'(\rho)G^\top L(\rho)G + G^\top L(\rho)G\Phi'(\rho)Q^\top\big\}\psi,
	\psi\big\rangle \\
	&= \frac12\langle \widetilde MG\psi,G\psi\rangle,
\end{align*}
where
\begin{align*}
  \widetilde M 
	&= DL(\rho)[Q\phi(\rho)]
	+ \diag(\kappa_i)G\diag(w_i^{-1})\Phi'(\rho)G^\top L(\rho) \\
	&\phantom{xx}{}
	+ L(\rho)G\Phi'(\rho)\diag(w_i^{-1})G^\top\diag(\kappa_i).
\end{align*}
This matrix is symmetric and tridiagonal with entries
$$
  \widetilde M = \frac{1}{h^2}\begin{pmatrix} 
	a_1 & b_1 & 0   & \cdots & 0 \\
	b_1 & a_2 & b_2 & \ddots & \vdots \\
	0   & b_2 & \ddots &     & 0 \\
	\vdots & \ddots &  & a_{n-1} & b_{n-1} \\
	0   & \cdots & 0 & b_{n-1}   & a_n 
	\end{pmatrix},
$$
where the coefficients are given by
\begin{align*}
  a_i &= 2\kappa_i\Lambda_i\bigg(\frac{\phi'(\rho_i)}{w_i}
	+\frac{\phi'(\rho_{i+1})}{w_i}\bigg) - \kappa_i\frac{\phi'(\rho_i)}{w_i}
	\pa_1\Lambda_i\big(\kappa_{i-1}(u_i-u_{i-1}) + \kappa_i(u_i-u_{i+1})\big) \\
	&\phantom{xx}{}
	- \kappa_i\frac{\phi'(\rho_{i+1})}{w_{i+1}}\pa_2\Lambda_i\big(\kappa_i(u_{i+1}-u_i)
	+ \kappa_{i+1}(u_{i+1}-u_{i+2})\big), \\
  b_i &= -\kappa_i\kappa_{i+1}\frac{\phi�(\rho_{i+1})}{w_{i+1}}
	(\Lambda_i+\Lambda_{i+1}) \le 0,
\end{align*}
and we abbreviated
$$
  \Lambda_i:=\Lambda(u_i,u_{i+1}), \quad
	\pa_j\Lambda_i:=\pa_j\Lambda(u_i,u_{i+1}), \quad j=1,2.
$$

We show now that $\widetilde M-\lambda_h L(\rho)$ is diagonally dominant for
some $\lambda\in\R$. For this, we introduce further abbreviations:
$$
  \alpha_i = \kappa_i\frac{\phi'(\rho_i)}{w_i}, \quad
	\beta_i = \kappa_i\frac{\phi'(\rho_{i+1})}{w_{i+1}}.
$$
Since $\kappa_i\alpha_{i+1}=\kappa_{i+1}\beta_i$, we compute
\begin{align}
  a_i+b_{i-1}+b_i
	&= 2\kappa_i\Lambda_i(\alpha_i+\beta_i) 
	- \kappa_i\beta_{i-1}(\Lambda_{i-1}+\Lambda_i)
	- \kappa_i\alpha_{i+1}(\Lambda_i+\Lambda_{i+1}) \nonumber \\
	&\phantom{xx}{}- \kappa_{i}\alpha_i\pa_1\Lambda_i(u_i-u_{i+1})
	- \kappa_i\beta_i\pa_2\Lambda_i(u_{i+1}-u_i) \nonumber \\
	&\phantom{xx}{}- \kappa_{i-1}\alpha_i\pa_1\Lambda_i(u_i-u_{i-1})
	- \kappa_{i+1}\beta_i\pa_2\Lambda_i(u_{i+1}-u_{i+2}) \nonumber \\
	&= \kappa_i\Lambda_i(2\alpha_i+2\beta_i-\beta_{i-1}-\alpha_{i+1}) \nonumber \\
	&\phantom{xx}{}- \kappa_i\alpha_i\pa_1\Lambda_i(u_i-u_{i+1})
	- \kappa_i\beta_i\pa_2\Lambda_i(u_{i+1}-u_i) \nonumber \\
	&\phantom{xx}{}- \kappa_i\beta_{i-1}(\Lambda_{i-1}-\pa_1\Lambda_i u_{i-1})
	- \kappa_i\alpha_{i+1}(\Lambda_{i+1}-\pa_2\Lambda_iu_{i+2}) \nonumber \\
	&\phantom{xx}{}- \kappa_{i-1}\alpha_i\pa_1\Lambda_i u_i
	- \kappa_{i+1}\beta_i\pa_2\Lambda_iu_{i+1} \nonumber \\
  &= I_1+\cdots+I_7. \label{4.aux1}
\end{align}
We estimate these expressions term by term. Using property (ii) of
Lemma \ref{lem.Lambda}, we find that
$$
  I_2 = -\kappa_i\alpha_i\Lambda_i + \kappa_i\alpha_i\frac{\Lambda_i^2}{u_i}, \quad
	I_3 = -\kappa_i\beta_i\Lambda_i + \kappa_i\beta_i\frac{\Lambda_i^2}{u_{i+1}}.
$$
The first terms on the right-hand sides cancel with some terms in $I_1$.
By property (iv) of Lemma \ref{lem.Lambda}, it follows that
\begin{align*}
  I_4 &\ge -\kappa_i\beta_{i-1}\max_{r\ge 0}
	\big(\Lambda(r,u_{i})-\pa_1\Lambda(u_i,u_{i+1})r\big) 
	= -\kappa_i\beta_{i-1}u_{i}\pa_2\Lambda(u_i,u_{i+1}) \\
	&= -\kappa_i\beta_{i-1}u_i\pa_2\Lambda_i, \\
	I_5 &\ge -\kappa_i\alpha_{i+1}\max_{r\ge 0}
	\big(\Lambda(u_{i+1},r)-\pa_2\Lambda(u_i,u_{i+1})r\big) \\
	&= -\kappa_i\alpha_{i+1}\max_{r\ge 0}
	(\Lambda(r,u_{i+1})-\pa_1\Lambda(u_{i+1},u_{i})r) 
	= -\kappa_i\alpha_{i+1}u_{i+1}\pa_2\Lambda(u_{i+1},u_i) \\
	&= -\kappa_i\alpha_{i+1}u_{i+1}\pa_1\Lambda(u_{i},u_{i+1})
	= -\kappa_i\alpha_{i+1}u_{i+1}\pa_1\Lambda_i.
\end{align*}
Finally, because of $\kappa_i\alpha_{i+1}=\kappa_{i+1}\beta_i$,
$$
  I_6 = -\kappa_i\beta_{i-1}\pa_1\Lambda_iu_i, \quad
	I_7 = -\kappa_i\alpha_{i+1}\pa_2\Lambda_i u_{i+1}.
$$

Inserting these computations into \eqref{4.aux1}, we arrive at
\begin{align*}
  a_i+b_{i-1}+b_i 
	&\ge \kappa_i\Lambda_i(\alpha_i+\beta_i-\beta_{i-1}-\alpha_{i+1})
	+ \kappa_i\Lambda_i^2\bigg(\frac{\alpha_i}{u_i}+\frac{\beta_i}{u_{i+1}}\bigg) \\
	&\phantom{xx}{}- \kappa_i(\beta_{i-1}u_i+\alpha_{i+1}u_{i+1})
	(\pa_1\Lambda_i+\pa_2\Lambda_i).
\end{align*}
Employing property (iii) of Lemma \ref{lem.Lambda} in the last term, we obtain
\begin{align}
  a_i+b_{i-1}+b_i 
	&\ge \kappa_i\Lambda_i(\alpha_i+\beta_i-\beta_{i-1}-\alpha_{i+1})
	+ \kappa_i\Lambda_i^2\bigg(\frac{\alpha_i-\alpha_{i+1}}{u_i}
	+ \frac{\beta_i-\beta_{i-1}}{u_{i+1}}\bigg) \nonumber \\
	&= J_1 + J_2. \label{4.aux2}
\end{align}
The idea is to replace $\kappa_{i\pm 1}$ in $\beta_{i-1}$ and $\alpha_{i+1}$ 
by an expression involving only $\kappa_i$. 
By definition of $\alpha_i$ and $\beta_i$ and since 
\begin{align*}
  \frac{\kappa_{i+1}}{w_{i+1}} 
	&= \frac{\kappa_i}{w_i}\,\frac{\kappa_{i+1}}{\kappa_i}\,\frac{w_i}{w_{i+1}}
	= \frac{\kappa_i}{w_i}\,\frac{\sqrt{w_iw_{i+2}}}{w_{i+1}}
	= \frac{\kappa_i}{w_i}e^{-\gamma h^2/2}, \\
	\frac{\kappa_{i-1}}{w_i}
	&= \frac{\kappa_i}{w_{i+1}}\,\frac{\kappa_{i-1}}{\kappa_i}
	\,\frac{w_{i+1}}{w_i}
	= \frac{\kappa_i}{w_i}\,\frac{\sqrt{w_{i-1}w_{i+1}}}{w_{i}}
	= \frac{\kappa_i}{w_{i}}e^{-\gamma h^2/2},
\end{align*}
we find that
\begin{align*}
  J_1 &= \kappa_i\Lambda_i\bigg(\frac{\kappa_i}{w_i}\phi'(\rho_i) 
	- \frac{\kappa_{i+1}}{w_{i+1}}\phi'(\rho_{i+1})
	+ \frac{\kappa_i}{w_{i+1}}\phi'(\rho_{i+1})
	- \frac{\kappa_{i-1}}{w_i}\phi'(\rho_i)\bigg) \\
	&= \frac{\kappa_i^2}{w_i}\Lambda_i
	\big(\phi'(\rho_i)-e^{-\gamma h^2/2}\phi'(\rho_{i+1})\big)
	+ \frac{\kappa_i^2}{w_{i+1}}\Lambda_i\big(\phi'(\rho_{i+1})-e^{-\gamma h^2/2}
	\phi'(\rho_{i})\big).
\end{align*}
In the same way, since
$$
  \kappa_{i+1}\frac{w_i}{w_{i+1}} = \kappa_i\frac{\sqrt{w_iw_{i+2}}}{w_{i+1}}
	= \kappa_i e^{-\gamma h^2/2}, \quad
	\kappa_{i-1}\frac{w_{i+1}}{w_i} = \kappa_i\frac{\sqrt{w_{i-1}w_{i+1}}}{w_i}
	= \kappa_i e^{-\gamma h^2/2},
$$
we infer that
\begin{align*}
  J_2 &= \kappa_i\Lambda_i^2\bigg(\kappa_i\frac{\phi'(\rho_i)}{\phi(\rho_i)}
  - \kappa_{i+1}\frac{w_i}{w_{i+1}}\frac{\phi'(\rho_{i+1})}{\phi(\rho_i)}
  + \kappa_i\frac{\phi'(\rho_{i+1})}{\phi(\rho_{i+1})}
	- \kappa_{i-1}\frac{w_{i+1}}{w_i}\frac{\phi'(\rho_i)}{\phi(\rho_{i+1})}\bigg) \\
	&= \kappa_i^2\Lambda_i^2\bigg(
	\frac{\phi'(\rho_i)-e^{-\gamma h^2/2}\phi'(\rho_{i+1})}{\phi(\rho_i)}
	+ \frac{\phi'(\rho_{i+1})-e^{-\gamma h^2/2}\phi'(\rho_i)}{\phi(\rho_{i+1})}
  \bigg).
\end{align*}
Thus, \eqref{4.aux2} becomes
\begin{align}
  a_i+b_{i-1}+b_i 
	&\ge \kappa_i^2\Lambda_i\bigg(
	\frac{\phi'(\rho_i)-e^{-\gamma h^2/2}\phi'(\rho_{i+1})}{w_i}
	+ \frac{\phi'(\rho_{i+1})-e^{-\gamma h^2/2}\phi'(\rho_{i})}{w_{i+1}}\bigg) 
	\nonumber \\
	&\phantom{xx}{}	+ \kappa_i^2\Lambda_i^2
	\bigg(\frac{\phi'(\rho_i)-e^{-\gamma h^2/2}\phi'(\rho_{i+1})}{\phi(\rho_i)}
	+ \frac{\phi'(\rho_{i+1})-e^{-\gamma h^2/2}\phi'(\rho_{i})}{\phi(\rho_{i+1})}\bigg) 
	\nonumber \\
	&= \kappa_i^2\Lambda_i\big(\phi'(\rho_i)-\phi'(\rho_{i+1})\big)
	\bigg[\Lambda(u_i,u_{i+1})\bigg(\frac{1}{\phi(\rho_i)}-\frac{1}{\phi(\rho_{i+1})}
	\bigg) + \frac{1}{w_i} - \frac{1}{w_{i+1}}\bigg] \nonumber \\
	&\phantom{xx}{}
	+ \kappa_i^2\Lambda_i\big(1-e^{-\gamma h^2/2}\big)\bigg[\frac{\phi'(\rho_i)}{w_{i+1}}
	+ \frac{\phi'(\rho_{i+1})}{w_i} \nonumber \\
	&\phantom{xx}{}+ \Lambda(u_i,u_{i+1})
	\bigg(\frac{\phi'(\rho_i)/w_{i+1}}{u_{i+1}} + \frac{\phi'(\rho_{i+1})/w_i}{u_i}
	\bigg)\bigg] \nonumber \\
	&= K_1 + K_2. \label{4.aux3}
\end{align}
First, we estimate $K_2$ using property (v) of Lemma \ref{lem.Lambda}:
\begin{align*}
  K_2 &\ge 2\kappa_i^2\Lambda_i
	\big(1-e^{-\gamma h^2/2}\big)\bigg(\frac{\phi'(\rho_i)}{w_{i+1}}
	+ \frac{\phi'(\rho_{i+1})}{w_i} 
	+ 2\sqrt{\frac{\phi'(\rho_i)\phi'(\rho_{i+1})}{w_i w_{i+1}}}\bigg) \\
	&\ge 2\kappa_i\Lambda_i\big(1-e^{-\gamma h^2/2}\big)
	\sqrt{\phi'(\rho_i)\phi'(\rho_{i+1})}
	\ge 2\kappa_i\Lambda_i\big(1-e^{-\gamma h^2/2}\big)\min_{i=0,\ldots,n}\phi'(\rho_i).
\end{align*}
Since $\phi'\circ\phi^{-1}$ is nonincreasing, we have
$$
  \big(\phi'(\rho_i)-\phi'(\rho_{i+1})\big)
	\bigg(\frac{1}{\phi(\rho_i)}-\frac{1}{\phi(\rho_{i+1})}\bigg) \ge 0.
$$
Consequently, since $\Lambda(u_i,u_{i+1})\ge 0$ and $\sinh(s)\le s\cosh(s)$ 
for $s\ge 0$,
\begin{align*}
  K_1 &\ge \kappa_i^2\Lambda_i\big(\phi'(\rho_i)-\phi'(\rho_{i+1})\big)
	\bigg(\frac{1}{w_i} - \frac{1}{w_{i+1}}\bigg) \\
	&= \kappa_i\Lambda_i\big(\phi'(\rho_i)-\phi'(\rho_{i+1})\big)
	\bigg(\sqrt{\frac{w_{i+1}}{w_i}} - \sqrt{\frac{w_i}{w_{i+1}}}\bigg) \\
	&= -\kappa_i\Lambda_i\big(\phi'(\rho_i)-\phi'(\rho_{i+1})\big)
	\big(e^{\gamma(x_{i+1}^2-x_i^2)/4}-e^{-\gamma(x_{i+1}^2-x_i^2)/4}\big) \\
	&\ge -2\kappa_i\Lambda_i h\max_{i=0,\ldots,n}|\na_h\phi'(\rho_i)|
	\sinh\bigg(\frac{\gamma}{4}(2i+1)h^2\bigg) \\
	&\ge -2\kappa_i\Lambda_i h\max_{i=0,\ldots,n}|\na_h\phi'(\rho_i)|
	\bigg(\frac{\gamma}{4}(2i+1)h^2\bigg)\cosh\bigg(\frac{\gamma}{4}(2i+1)h^2\bigg) \\
	&\ge -2\kappa_i\Lambda_i h^2\max_{i=0,\ldots,n}|\na_h\phi'(\rho_i)|
	\gamma \cosh(\gamma h),
\end{align*}
where we recall that $|\na_h\phi'(\rho_i)|:=h^{-1}|\phi'(\rho_i)-\phi'(\rho_{i+1})|$
and we used $h\le 1$.
Then \eqref{4.aux3} yields
\begin{align*}
  h^{-2} (a_i+b_{i-1}+b_i) &\ge
	\gamma\kappa_i\Lambda_i
	\bigg(\frac{2}{\gamma h^2}(1-e^{-\gamma h^2/2})\min_{i=0,\ldots,n}\phi'(\rho_i) \\
	&\phantom{xx}{}
	- 2\cosh(\gamma h)\max_{i=0,\ldots,n}|\na_h\phi'(\rho_i)|\bigg) \\
	&= \lambda_h\kappa_i\Lambda_i. 
\end{align*}
This proves that $\widetilde M-\lambda_h L(\rho)$ is positive semidefinite,
finishing the proof.
\end{proof}

If the potential vanishes, we can define $w_i=1$ for all $i=0,\ldots,n$.
Then the entropy
$$
  \en(\rho) = \sum_{i=0}^n f(\rho_i)\quad\mbox{with }f'(s)=\log\phi(s)
$$
is displacement convex with respect to $\W$. The following remark, based on an
idea of \cite{ErMa14}, shows that
this result may not hold for other entropies.

\begin{remark}\rm\label{rem.alter}
Erbar and Maas \cite{ErMa14} considered the diffusion equation in the form
$$
  \pa_t\rho = \Delta\phi(\rho) = \diver(\rho\na U'(\rho)),
$$
where $U$ satisfies $sU''(s)=\phi'(s)$. The corresponding numerical scheme becomes
$$
  \pa_t\rho = -K(\rho)U'(\rho), \quad K(\rho)=G^\top L(\rho)G,
$$
where $U'(\rho)=(U'(\rho_0),\ldots,U'(\rho_n))$ and the operator $L(\rho)$ is
again defined by $L(\rho)=\diag(\Lambda(\rho_i,\rho_{i+1}))$, but with the mean 
function
\begin{equation}\label{4.EM}
  \Lambda(\rho_i,\rho_{i+1}) 
	= \frac{\phi(\rho_i)-\phi(\rho_{i+1})}{U'(\rho_i)-U'(\rho_{i+1})}.
\end{equation}
The associated entropy is $\en(\rho)=\sum_{i=0}^n U(\rho_i)$,
and if $\rho$ is a geodesic curve on $X_n$ with respect to the nonlinear
transportation metric $\W$ induced by \eqref{4.EM}, then
$$
  \frac{d^2}{dt^2}\en(\rho) = \frac12\langle \widetilde M(\rho)G\psi,G\psi\rangle
$$
where $\widetilde M = DL(\rho)L(\rho)[Q\phi(\rho)] + G\Phi'(\rho)G^\top L(\rho)
+ L(\rho)G\Phi'(\rho)G^\top$. In fact, $\widetilde M$ is the tridiagonal matrix
$$
  \widetilde M = \frac{1}{h^2}\begin{pmatrix}
	d_1 & c_1 & 0   & \cdots & 0 \\
	c_1 & d_2 & c_2 & \ddots & \vdots \\
	0   & c_2 & \ddots &     & 0 \\
	\vdots & \ddots &  & d_{n-1} & c_{n-1} \\
	0   & \cdots & 0 & c_{n-1}   & d_n 
	\end{pmatrix}, 
$$
with the matrix coefficients 
\begin{align*}
  d_i &= 2\Lambda(\rho_i,\rho_{i+1})\big(\phi'(\rho_i)+\phi'(\rho_{i+1})\big)
	+ \pa_1\Lambda(\rho_i,\rho_{i+1})\big(\phi(\rho_{i-1}) - 2\phi(\rho_i)
	+\phi(\rho_{i+1})\big) \\
	&\phantom{xx}{}+ \pa_2\Lambda(\rho_i,\rho_{i+1})\big(\phi(\rho_i)
	-2\phi(\rho_{i+1})+\phi(\rho_{i+2})\big), \quad i=1,\ldots,n, \\
	c_i &= -\phi'(\rho_{i+1})\big(\Lambda(\rho_i,\rho_{i+1})
	+ \Lambda(\rho_{i+1},\rho_{i+2})\big), \quad i=1,\ldots,n-1.
\end{align*}
If $\phi(s)=s^2$, we have $\Lambda(s,t)=(s+t)/2$ and the second principal minor
equals
\begin{align*}
  d_1d_2-c_1^2
	&= \frac12\rho_0^2\rho_1^2 + \frac32\rho_0^2\rho_2^2 + 4\rho_0^2\rho_2\rho_3
	+ \frac32\rho_0^2\rho_3^2 + \frac12\rho_0^2\rho_4^2 + \rho_0\rho_1^3
	+ 3\rho_0\rho_1\rho_2^2 \\
	&\phantom{xx}{}+ 8\rho_0\rho_1\rho_2\rho_3 + 3\rho_0\rho_1\rho_3^2
	+ \rho_0\rho_1\rho_4^2 + \frac14\rho_1^4 + 2\rho_1^2\rho_2\rho_3
	+ \frac34\rho_1^2\rho_3^2 + \frac14\rho_1^2\rho_4^2 \\
	&\phantom{xx}{}- 4\rho_1\rho_2^3
	- 2\rho_1\rho_2^2\rho_3 - \frac{13}{4}\rho_2^4 - 2\rho_2^3\rho_3
	- \frac14\rho_2^2\rho_3^2 + \frac14\rho_2^2\rho_4^2.
\end{align*}
The coefficient 13/4 of the highest power in $\rho_2$ is negative and therefore,
the second principal minor may be negative. According to Sylvester's criterion,
$\widetilde M$ is not positive semidefinite. For instance, choosing special 
initial data, the entropy fails to be convex at time $t=0$. 
\qed
\end{remark}


\begin{appendix}
\section{Properties of mean functions}\label{sec.mean}

We need some properties of the mean function
\begin{equation}\label{a.La}
  \Lambda^f(s,t) = \frac{s-t}{f'(s)-f'(t)} \quad\mbox{for }s\neq t, \quad
	\Lambda^f(s,s) = \frac{1}{f''(s)},
\end{equation}
which we recall here. First, we are concerned
with the logarithmic mean, i.e.\ $f'(s)=\log s$, for which we write 
simply $\Lambda$.

\begin{lemma}[Properties of the logarithmic mean]\label{lem.Lambda}
For all $s$, $t>0$, we have
\begin{align*}
  {\rm (i)}\qquad & 
	\Lambda(s,t)=\Lambda(t,s), \quad \pa_1\Lambda(s,t)=\pa_2\Lambda(t,s), \\
	{\rm (ii)}\qquad & 
	\pa_1\Lambda(s,t) = \frac{\Lambda(s,t)(s-\Lambda(s,t))}{s(s-t)}, \quad
	s\neq t, \\
	{\rm (iii)}\qquad &
	\pa_1\Lambda(s,t)+\pa_2\Lambda(s,t) = \frac{\Lambda(s,t)^2}{st}, \\
	{\rm (iv)}\qquad & 
	\max_{r\ge 0}\big(\Lambda(r,t)-\pa_1\Lambda(t,s)r\big) = t\pa_1\Lambda(s,t), \\
	{\rm (v)}\qquad &
	\Lambda(s,t)\bigg(\frac{a}{s}+\frac{b}{t}\bigg) \ge 2\sqrt{ab}\quad
	\mbox{for }a,b>0.
\end{align*}
\end{lemma}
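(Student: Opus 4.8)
The plan is to reduce everything to a few structural facts about the logarithmic mean. From the formula $\Lambda(s,t)=(s-t)/(\log s-\log t)$ one reads off that $\Lambda$ is symmetric and positively homogeneous of degree one, $\Lambda(\mu s,\mu t)=\mu\Lambda(s,t)$ for $\mu>0$. Differentiating this identity in $\mu$ at $\mu=1$ gives Euler's relation $s\,\pa_1\Lambda(s,t)+t\,\pa_2\Lambda(s,t)=\Lambda(s,t)$, while differentiating it in $s$ gives that $\pa_1\Lambda$ (and hence $\pa_2\Lambda$) is homogeneous of degree zero. Finally, $r\mapsto\Lambda(r,t)$ is increasing and concave on $(0,\infty)$; by homogeneity this reduces to the single-variable statement that $\lambda(r):=\Lambda(r,1)=(r-1)/\log r$ satisfies $\lambda'\ge0\ge\lambda''$, which I would check by noting $\lambda'(r)=p(r)/(\log r)^2$ with $p(r)=\log r-1+1/r\ge0$ and $\lambda''(r)=\bigl(2(r-1)-(r+1)\log r\bigr)/\bigl(r^2(\log r)^3\bigr)$, whose numerator is monotone with a zero at $r=1$. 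These are the only inputs I will use.

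Parts (i)--(iii) and (v) are then short. Part (i) is the definition together with the chain rule. For (ii) I apply the quotient rule: with $L=\log s-\log t$ one has $\pa_1\Lambda=1/L-(s-t)/(sL^2)$, and substituting $1/L=\Lambda/(s-t)$ and $1/L^2=\Lambda^2/(s-t)^2$ gives $\pa_1\Lambda(s,t)=\Lambda(s,t)\bigl(s-\Lambda(s,t)\bigr)/\bigl(s(s-t)\bigr)$. Part (iii) follows either by using (i) to write $\pa_2\Lambda(s,t)=\pa_1\Lambda(t,s)$, plugging in (ii), and simplifying, or directly by inserting (ii) into Euler's relation. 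For (v), AM--GM gives $\Lambda(s,t)(a/s+b/t)\ge 2\sqrt{ab}\,\Lambda(s,t)/\sqrt{st}$, so it remains to prove $\Lambda(s,t)\ge\sqrt{st}$; with $x=\sqrt{s/t}$ this is $x-1/x\ge 2\log x$ for $x\ge1$, which holds since the difference vanishes at $x=1$ and has derivative $(1-1/x)^2\ge0$.

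The substance of the lemma is (iv). Fix $s,t>0$, put $c:=\pa_1\Lambda(t,s)$, and consider $g(r):=\Lambda(r,t)-cr$ on $(0,\infty)$. Since $\Lambda(\cdot,t)$ is increasing we have $c>0$, and $g$ is concave with $g(0^+)=0$ and $g(r)\to-\infty$ as $r\to\infty$ (because $\Lambda(r,t)=o(r)$); hence the maximum is attained at the unique interior critical point of $g$. By degree-zero homogeneity of $\pa_1\Lambda$, $\pa_1\Lambda(t^2/s,t)=\pa_1\Lambda(t/s,1)=\pa_1\Lambda(t,s)=c$, so that critical point is $r^\ast=t^2/s$. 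It remains to evaluate $g(r^\ast)$: degree-one homogeneity gives $\Lambda(t^2/s,t)=(t/s)\Lambda(t,s)=(t/s)\Lambda(s,t)$, and (i) gives $c=\pa_1\Lambda(t,s)=\pa_2\Lambda(s,t)$, so $g(r^\ast)=(t/s)\bigl(\Lambda(s,t)-t\,\pa_2\Lambda(s,t)\bigr)=(t/s)\cdot s\,\pa_1\Lambda(s,t)=t\,\pa_1\Lambda(s,t)$, the middle step being Euler's relation. The main obstacle is organizing (iv) correctly: one has to spot the maximizer $r^\ast=t^2/s$, apply the two homogeneities (degree zero for $\pa_1\Lambda$, degree one for $\Lambda$) without mixing them up, and have the concavity of $\Lambda(\cdot,t)$ in hand so that the critical point really is the global maximum over all $r\ge0$.
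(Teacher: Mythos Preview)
Your argument is correct. The paper itself does not give a proof: it merely says that (i)--(iii) ``can be easily verified by a calculation'' and refers (iv)--(v) to Appendix~A of \cite{Mie13}. So your write-up is necessarily more detailed than what appears here, and for (i)--(iii) and (v) your computations are exactly the kind of verification the authors have in mind.

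For (iv) your route---exploiting degree-one homogeneity of $\Lambda$ and degree-zero homogeneity of $\pa_1\Lambda$ to locate the maximizer $r^\ast=t^2/s$, then Euler's relation $s\,\pa_1\Lambda+t\,\pa_2\Lambda=\Lambda$ to evaluate $g(r^\ast)$---is a clean self-contained alternative to citing \cite{Mie13}. One small point of presentation: when you sketch the concavity of $\lambda(r)=\Lambda(r,1)$ via the sign of $\lambda''(r)=\bigl(2(r-1)-(r+1)\log r\bigr)/\bigl(r^2(\log r)^3\bigr)$, you should note explicitly that the denominator also changes sign at $r=1$, so that numerator and denominator have opposite signs on each side of $1$; your phrase ``numerator is monotone with a zero at $r=1$'' handles only half of that. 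Also, uniqueness of the critical point is not needed---concavity alone guarantees that any critical point of $g$ is a global maximizer---so you could drop that claim and avoid having to argue strict concavity.
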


\begin{proof}
Properties (i)-(iii) can be easily verified by a calculation.
Properties (iv)-(v) are shown in \cite[Appendix A]{Mie13}.
\end{proof}

\begin{lemma}\label{lem.second}
Let $\Lambda\in C^1([0,\infty)^2)$ be any function being
concave in both variables, and let $u_0,u_1,u_2,u_3\ge 0$. Then
\begin{align}
  -\Lambda(u_0,u_1) &+ 2\Lambda(u_1,u_2) - \Lambda(u_2,u_3) \nonumber \\
	&\ge \pa_1\Lambda(u_1,u_2)(-u_0+2u_1-u_2) + \pa_2\Lambda(u_1,u_2)(-u_1+2u_2-u_3).
	\label{soineq}
\end{align}
\end{lemma}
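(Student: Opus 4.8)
The plan is to reduce inequality \eqref{soineq} to a one-dimensional convexity statement by fixing convenient ``slices'' of the function $\Lambda$. First I would rewrite the left-hand side as a discrete second difference: setting $g(k)=\Lambda(u_{k-1},u_k)$ for $k=1,2,3$, the left side is $g(1)-2g(2)+g(3)$ evaluated with the middle term $g(2)=\Lambda(u_1,u_2)$, and the right side is precisely the first-order Taylor expansion of $g(1)+g(3)$ about the point $(u_1,u_2)$ using the gradient $(\pa_1\Lambda(u_1,u_2),\pa_2\Lambda(u_1,u_2))$. So the claim is equivalent to
\[
  \Lambda(u_0,u_1) + \Lambda(u_2,u_3)
	\ge 2\Lambda(u_1,u_2) + \pa_1\Lambda(u_1,u_2)(u_0-u_2) + \pa_2\Lambda(u_1,u_2)(u_1-u_3),
\]
i.e.\ that the sum of the values of $\Lambda$ at the two ``outer'' points lies above the tangent plane of $\Lambda$ at the ``inner'' point, after a suitable identification of increments.

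The key step is then to invoke concavity in each variable \emph{separately}, applied along axis-parallel segments, together with the symmetry $\Lambda(s,t)=\Lambda(t,s)$ (which is available for the logarithmic mean and is harmless to assume, or can be circumvented). Concavity in the first variable gives
\[
  \Lambda(u_0,u_1) \le \Lambda(u_2,u_1) + \pa_1\Lambda(u_2,u_1)(u_0-u_2),
\]
wait --- the inequality goes the wrong way for a direct sum, so instead I would exploit that the desired inequality is itself a \emph{midpoint-type} estimate. Concretely, I would apply concavity twice to the inner point: by concavity in the second variable at $(u_1,u_2)$ we get $\Lambda(u_1,u_2)\le \Lambda(u_1,u_1)+\pa_2\Lambda(u_1,u_1)(u_2-u_1)$ type bounds are again the wrong direction, so the actual route must be: use concavity of $t\mapsto\Lambda(u_1,t)$ to compare $\Lambda(u_1,u_2)$ above the chord through $\Lambda(u_1,u_1)$ and $\Lambda(u_1,u_3)$, then concavity of $s\mapsto\Lambda(s,u_1)$, and finally recombine. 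The cleanest version is to note that \eqref{soineq} is exactly the statement that the bilinear-interpolation-type functional $(u_0,u_1,u_2,u_3)\mapsto \text{LHS}-\text{RHS}$ is nonnegative, and this functional is itself concave in $u_0$ and in $u_3$ (linearly entering through a single evaluation each, via concavity of $\Lambda$ in one slot), so it is minimized at the extreme configuration, which I would check by hand; the remaining reduced inequality in $(u_1,u_2)$ alone follows from concavity along the diagonal direction.

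Concretely, the steps in order: (1) rewrite \eqref{soineq} in the tangent-plane form above; (2) hold $u_1,u_2$ fixed and observe that, as a function of $u_0$ alone, the inequality reads $\Lambda(u_0,u_1)\ge \Lambda(u_1,u_2)+\pa_1\Lambda(u_1,u_2)(u_0-u_2)+(\text{terms in }u_3)$, and since $u_0\mapsto\Lambda(u_0,u_1)$ is concave, the worst case is at $u_0\to\infty$ or $u_0=0$ --- instead, better: since $\Lambda$ is concave in the first variable, $\Lambda(u_0,u_1)\ge$ [the supporting line is an over-estimate], so one should rather split the proof into $u_0\ge u_2$ and $u_0< u_2$ and use the mean value form $\Lambda(u_0,u_1)-\Lambda(u_2,u_1)=\pa_1\Lambda(\xi,u_1)(u_0-u_2)$ with monotonicity of $\pa_1\Lambda$ from concavity; (3) do the symmetric thing in $u_3$ using concavity in the second variable; (4) the leftover is $2\Lambda(u_1,u_2)-\Lambda(u_2,u_1)-\Lambda(u_1,u_2)$-type cancellation reducing everything to $0\ge0$ after using (i) of Lemma \ref{lem.Lambda}.

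The main obstacle I anticipate is bookkeeping the signs: concavity of a $C^1$ function $\Lambda$ in one variable gives that the graph lies \emph{below} its tangent planes, whereas \eqref{soineq} wants a lower bound on $-\Lambda(u_0,u_1)-\Lambda(u_2,u_3)$, i.e.\ an \emph{upper} bound on $\Lambda(u_0,u_1)+\Lambda(u_2,u_3)$ --- so the direction matches, and the real work is in showing that the two separate upper bounds, when added to $-2\Lambda(u_1,u_2)$ and the gradient terms, close up. I expect this to come out of the elementary fact that for a $C^1$ concave function $g$ of one variable, $g(a)+g(c)\le 2g(b)+g'(b)(a+c-2b)$ whenever $b$ lies between $a$ and $c$; applying this with $g(\cdot)=\Lambda(\cdot,u_1)$ on $[u_0\wedge u_2,u_0\vee u_2]$ after matching second slots via $\Lambda(u_1,\cdot)$ concavity, and then symmetrizing with (i), finishes the argument. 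If symmetry of $\Lambda$ is not wanted as a hypothesis, the same works by treating the two slots independently and only using one-variable concavity twice.
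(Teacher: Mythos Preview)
Your proposal circles around the right idea but never lands, and the difficulty stems from reading the hypothesis too weakly. ``Concave in both variables'' here means \emph{jointly} concave on $[0,\infty)^2$, not merely concave in each slot separately. With joint concavity the proof is two lines: the tangent-plane inequality
\[
  \Lambda(x,y) \le \Lambda(u_1,u_2) + \pa_1\Lambda(u_1,u_2)(x-u_1) + \pa_2\Lambda(u_1,u_2)(y-u_2)
\]
applied once at $(x,y)=(u_0,u_1)$ and once at $(x,y)=(u_2,u_3)$; add the two inequalities and rearrange. This is exactly the paper's argument. Your own ``elementary fact'' $g(a)+g(c)\le 2g(b)+g'(b)(a+c-2b)$ is precisely this structure, but it must be used for the \emph{two-variable} function $\Lambda$ at the base point $(u_1,u_2)$, not for one-dimensional slices. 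No symmetry of $\Lambda$, no case splits, and no reduction to the diagonal are needed.

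Your attempts via one-variable concavity alone cannot close, and in fact \eqref{soineq} is false under that weaker hypothesis. Take $\Lambda(s,t)=st$, which is linear (hence concave) in each variable separately but not jointly concave; with $(u_0,u_1,u_2,u_3)=(2,1,0,0)$ the left side of \eqref{soineq} equals $-2$ while the right side equals $-1$. This explains why every time you tried to bound $\Lambda(u_0,u_1)$ by freezing one slot you found the inequality going ``the wrong way'': passing from $(u_0,u_1)$ to the base point $(u_1,u_2)$ moves \emph{both} arguments, and only joint concavity controls that in a single step. (Incidentally, your displayed ``equivalent'' reformulation near the top has the wrong inequality sign and the wrong linear increments; it should read $\le$, with $u_0-2u_1+u_2$ and $u_1-2u_2+u_3$ multiplying the partial derivatives.)
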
 

\begin{proof}
Since $\Lambda$ is concave in both variables, we have
\begin{align*}
  \Lambda(u_0,u_1)-\Lambda(u_1,u_2) &\le \pa_1\Lambda(u_1,u_2)(u_0-u_1)
	+ \pa_2\Lambda(u_1,u_2)(u_1-u_2), \\
	\Lambda(u_2,u_3)-\Lambda(u_1,u_2) &\le \pa_1\Lambda(u_1,u_2)(u_2-u_1)
	+ \pa_2\Lambda(u_1,u_2)(u_3-u_2),
\end{align*}
and adding both inequalities gives the conclusion.
\end{proof}

\begin{lemma}[Concavity of mean functions]\label{lem.mean}
Let $\Lambda^f:[0,\infty)^2\to\R$ be given by \eqref{a.La} and let either
$f(s)=s(\log s-1)$ or $f(s)=s^\alpha$,
where $1<\alpha\le 2$. Then $\Lambda^f$ is concave in both variables.
\end{lemma}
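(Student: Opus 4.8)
The plan is to reduce the concavity of $\Lambda^f$ in each variable to a one-dimensional statement and verify it by explicit, but elementary, computation. By the symmetry $\Lambda^f(s,t)=\Lambda^f(t,s)$ (which holds for both candidate $f$'s since $f'$ is monotone), it suffices to show concavity in the first variable, i.e. $\pa_1^2\Lambda^f(s,t)\le 0$ for all $s,t\ge 0$. I would fix $t$ and study $g(s):=\Lambda^f(s,t)=(s-t)/(f'(s)-f'(t))$ for $s\neq t$, extended by $g(t)=1/f''(t)$. Differentiating twice, $g''(s)$ is a rational expression in $s-t$, $f'(s)-f'(t)$, $f''(s)$, and $f'''(s)$; multiplying through by the positive quantity $(f'(s)-f'(t))^3$, the sign of $g''(s)$ is the sign of a function
$$
  H(s,t) = -(s-t)\big(f'(s)-f'(t)\big)f'''(s) + 2\big(f'(s)-f'(t)\big)f''(s) - 2(s-t)f''(s)^2,
$$
and the claim is $H\le 0$ on $\{s,t\ge 0\}$ (with the limiting value at $s=t$ handled by a Taylor expansion, where $H$ vanishes to third order and the next term is a negative multiple of $f'''(t)^2/f''(t)$-type quantities—automatically nonpositive, or requiring a small check).

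For the two cases this becomes concrete. When $f(s)=s(\log s-1)$, so $f'(s)=\log s$, $f''(s)=1/s$, $f'''(s)=-1/s^2$, the logarithmic mean $\Lambda$ is already known (Lemma~\ref{lem.Lambda}) to satisfy a raft of identities; in particular one can use $\pa_1\Lambda(s,t)=\Lambda(\Lambda-s)^{-1}$\ldots rather, property (ii), $\pa_1\Lambda(s,t)=\Lambda(s,t)(s-\Lambda(s,t))/(s(s-t))$, and differentiate once more, reducing everything to the single-variable inequality $\log$ is concave combined with elementary bounds like $(x-1)/\log x \le (x+1)/2$ for $x>0$. When $f(s)=s^\alpha$ with $1<\alpha\le 2$, we have $f'(s)=\alpha s^{\alpha-1}$, so up to the positive constant $\alpha$ one studies $(s-t)/(s^{\alpha-1}-t^{\alpha-1})$; setting $s=tx$ with $x>0$ scales out $t$, and the question becomes whether $x\mapsto (x-1)/(x^{\alpha-1}-1)$ is concave on $(0,\infty)$ for $1<\alpha\le 2$. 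This is a clean single-variable calculus problem: compute the second derivative, clear denominators, and reduce to a polynomial/transcendental inequality in $x$ and $\beta:=\alpha-1\in(0,1]$.

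The main obstacle I anticipate is the $f(s)=s^\alpha$ case: after clearing denominators the sign condition becomes an inequality of the form
$$
  (\beta-1)(\beta-2)\,x^{2\beta} - 2(\beta-1)^2\, x^{\beta+1} + \big(\text{lower-order terms}\big) \le 0
$$
(schematically), which is not sign-definite term by term and must be handled by grouping—e.g. writing it as a perfect-square-like combination, or by the substitution $x^\beta = y$ and using convexity/AM–GM, or by checking it is an identity times a manifestly nonnegative factor. The boundary cases $\alpha=2$ (where $\Lambda^f(s,t)=(s+t)/2$ is affine, hence concave, giving a sanity check) and $\alpha\to 1^+$ (recovering the logarithmic mean) should be used to orient the algebra and to catch sign errors. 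I would also double-check the endpoint $s=0$ or $t=0$ separately, since $f'''$ blows up there, by arguing that $g$ extends continuously and the concavity is inherited from the interior by limiting (or by noting $g$ is monotone and bounded near the boundary). Once $H\le 0$ is established in each case, concavity in the second variable follows by symmetry, completing the proof.
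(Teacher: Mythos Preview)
Your plan targets the wrong notion of concavity. You reduce the claim to $\pa_1^2\Lambda^f(s,t)\le 0$, i.e.\ concavity of $s\mapsto\Lambda^f(s,t)$ for each fixed $t$ (and then invoke symmetry for the other slot). But in this paper ``concave in both variables'' means \emph{joint} concavity of $\Lambda^f$ on $[0,\infty)^2$: look at how the lemma is used in the proof of Lemma~\ref{lem.second}, where the first-order inequality
\[
  \Lambda(u_0,u_1)\le \Lambda(u_1,u_2)+\pa_1\Lambda(u_1,u_2)(u_0-u_1)+\pa_2\Lambda(u_1,u_2)(u_1-u_2)
\]
is applied with \emph{both} arguments varying simultaneously. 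Separate concavity in each variable does not yield this tangent-plane inequality; you would need the full Hessian of $\Lambda^f$ to be negative semidefinite, which in addition to $\pa_1^2\Lambda^f\le 0$ requires the determinant condition $\pa_1^2\Lambda^f\,\pa_2^2\Lambda^f-(\pa_1\pa_2\Lambda^f)^2\ge 0$. Your outline never touches the mixed derivative, so even if the one-variable computation were completed it would not establish what the paper actually needs.

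Two smaller points on the sketch itself. First, $(f'(s)-f'(t))^3$ is \emph{not} positive for $s<t$ (it has the sign of $s-t$ since $f'$ is increasing), so ``the sign of $g''(s)$ is the sign of $H(s,t)$'' is false as stated; you would have to track cases or multiply by $(f'(s)-f'(t))^4$ instead. Second, your expression for $H$ has a sign error in the last two terms: redoing the quotient-rule computation gives
\[
  g''(s)\,(f'(s)-f'(t))^3 = -(s-t)(f'(s)-f'(t))f'''(s) - 2(f'(s)-f'(t))f''(s) + 2(s-t)f''(s)^2,
\]
with the opposite signs to what you wrote.

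For comparison, the paper does not prove this lemma from scratch at all: it simply cites \cite{ErMa12} for the logarithmic case and \cite{JuYu15} for the power case, both of which establish \emph{joint} concavity. If you want a self-contained argument, the cleanest route for the logarithmic mean is the integral representation $\Lambda(s,t)=\int_0^1 s^{1-\theta}t^\theta\,d\theta$, which exhibits $\Lambda$ as an average of jointly concave functions; for $f(s)=s^\alpha$ one can argue via homogeneity (degree $2-\alpha$) to reduce to a one-variable check, but that check must control the full Hessian, not just $\pa_1^2$.
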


\begin{proof}
For $f(s)=s(\log s-1)$, we refer to \cite[Section 2]{ErMa12}. The statement
for $f(s)=s^\alpha$ is proved in \cite[Appendix]{JuYu15}.
\end{proof}


\section{A priori estimates}\label{sec.apriori}

\begin{lemma}[A priori estimates]\label{lem.est}
Let $ \phi$ be nondecreasing, $h>0$ and let 
$\rho=(\rho_0,\ldots,\rho_n)\in C^1([0,T^*];\R^{n+1})$ for some $T^*>0$ 
be the solution to 
\begin{equation}\label{a.eq}
  h^2\pa_t\rho_i = \phi(\rho_{i-1}) - 2\phi(\rho_i) + \phi(\rho_{i+1}),
	\quad i=0,\ldots,n,
\end{equation}
where $\rho_{-1}=\rho_0$ and $\rho_{n+1}=\rho_n$. Then, for all 
$i=0,\ldots,n$ and $t>0$,
\begin{align}
  & \min_{i=0,\ldots,n}\rho_i(0)
	\le \rho_i(t) \le \max_{i=0,\ldots,n}\rho_i(0), \label{est.max} \\
	& \max_{i=0,\ldots,n}|\na_h\phi(\rho_i(t))| 
	\le h^{-1/2}|\na_h\phi(\rho_i(0))|_2, \label{est.grad}
\end{align}
where $\na_h\phi(\rho_i(t)) = h^{-1}(\phi(\rho_{i+1}(t))-\phi(\rho_i(t)))$ and
\begin{equation}\label{norm}
  |\na_h\phi(\rho_i(0))|_2
	:= \bigg(\sum_{i=0}^n h\big|\na_h\phi(\rho_i(0))\big|^2\bigg)^{1/2}.
\end{equation}
\end{lemma}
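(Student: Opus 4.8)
The plan is to prove the two estimates separately, both by discrete maximum-principle / energy arguments adapted to the semidiscrete parabolic equation \eqref{a.eq}. Throughout I treat $\phi$ as nondecreasing (and, where differentiability is needed, I use difference quotients rather than $\phi'$, so that monotonicity alone suffices).

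For the $L^\infty$ bound \eqref{est.max}, I would argue by the classical ODE maximum principle. Let $M(t)=\max_{i}\rho_i(t)$ and suppose the maximum is attained at index $j$ at some time $t$; if $0<j<n$ then $h^2\pa_t\rho_j = \phi(\rho_{j-1})-2\phi(\rho_j)+\phi(\rho_{j+1}) = (\phi(\rho_{j-1})-\phi(\rho_j)) + (\phi(\rho_{j+1})-\phi(\rho_j)) \le 0$ because $\phi$ is nondecreasing and $\rho_{j\pm1}\le\rho_j$; the boundary indices $j=0$ or $j=n$ work the same way using $\rho_{-1}=\rho_0$, $\rho_{n+1}=\rho_n$ (one of the two differences simply vanishes). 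Hence $M(t)$ is nonincreasing, which gives the upper bound; the lower bound follows by applying the same reasoning to $-\rho$, or equivalently to $m(t)=\min_i\rho_i(t)$. The only mild technical point is to make the "differentiate the max" step rigorous, which one does in the standard way: either perturb to strict inequalities, or note that $t\mapsto M(t)$ is Lipschitz and its upper Dini derivative is bounded above by $\pa_t\rho_j$ at an index realizing the max.

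For the discrete gradient estimate \eqref{est.grad}, the strategy is a discrete $H^1$ energy estimate. Set $v_i=\phi(\rho_i)$ and note that \eqref{a.eq} reads $h^2\pa_t\rho_i = v_{i-1}-2v_i+v_{i+1} = -h^2(G^\top G v)_i$ with the discrete Laplacian and no-flux boundary conditions. Compute
\begin{align*}
  \frac{d}{dt}\sum_{i=0}^n h\,|\na_h v_i|^2
	&= 2\sum_{i=0}^n h\,\na_h v_i\,\na_h\pa_t v_i
	= 2\sum_{i=0}^n h\,\na_h v_i\,\na_h\big(\phi'(\rho_i)\pa_t\rho_i\big),
\end{align*}
and then discretely integrate by parts (summation by parts, using the no-flux conditions which kill the boundary terms) to move one difference operator onto $\na_h v$, producing $-2\sum_i h\,\phi'(\rho_i)\,|\Delta_h v_i|^2\le 0$ up to handling the variable coefficient $\phi'(\rho_i)\ge 0$. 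Since this derivative is $\le 0$, the discrete $H^1$ seminorm $\sum_i h|\na_h\phi(\rho_i(t))|^2$ is nonincreasing in $t$; combining with the trivial bound $\max_i|\na_h\phi(\rho_i(t))|^2 \le h^{-1}\sum_i h|\na_h\phi(\rho_i(t))|^2$ yields $\max_i|\na_h\phi(\rho_i(t))| \le h^{-1/2}|\na_h\phi(\rho_i(0))|_2$, which is exactly \eqref{est.grad}.

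The main obstacle is making the dissipation computation clean in the presence of the variable, only-nonnegative coefficient $\phi'(\rho_i)$: one must be careful that the summation-by-parts identity indeed produces a manifestly nonpositive term rather than a sign-indefinite one. The cleanest route is to differentiate a discrete \emph{Dirichlet-type energy in the variable} $v_i=\phi(\rho_i)$ directly — $\frac{d}{dt}\tfrac12\sum_i h|\na_h v_i|^2 = \sum_i h\,\na_h v_i\,\na_h\pa_t v_i$, then sum by parts to get $-\sum_i h\,(\Delta_h v_i)\,\pa_t v_i = -\sum_i h\,(\Delta_h v_i)\,\phi'(\rho_i)\,\pa_t\rho_i = -\sum_i h\,\phi'(\rho_i)\,(\Delta_h v_i)^2/h^2 \le 0$, where I used $\pa_t\rho_i = h^{-2}\Delta_h v_i$ from \eqref{a.eq} and $\phi'(\rho_i)\ge 0$. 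If one wants to avoid invoking $\phi'$ at all (only monotonicity of $\phi$), replace $\phi'(\rho_i)\pa_t\rho_i$ by $\pa_t\phi(\rho_i)$ and use that $(\phi(\rho_i))_t (\rho_i)_t \ge 0$ pointwise since $\phi$ is nondecreasing, together with the discrete duality $\sum_i h\,\na_h a_i\,\na_h b_i = -\sum_i h\,(\Delta_h a_i) b_i/h^{?}$ — here the monotone structure $\langle -G^\top G v, \phi^{-1}{}'\text{-type}\rangle$ does the job. Once the monotonicity of the discrete $H^1$ seminorm is established, the remaining inequality is the elementary embedding $\ell^\infty \hookrightarrow \ell^2$ with the stated $h^{-1/2}$ loss, and the proof is complete.
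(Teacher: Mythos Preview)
Your proposal is correct. For the gradient bound \eqref{est.grad} your energy argument is essentially identical to the paper's: differentiate $\sum_i(\phi(\rho_{i+1})-\phi(\rho_i))^2$, insert the equation, reindex/sum by parts, and obtain the dissipation $-\sum_i\phi'(\rho_i)\big(\phi(\rho_{i-1})-2\phi(\rho_i)+\phi(\rho_{i+1})\big)^2\le 0$, after which the $\ell^\infty\hookrightarrow\ell^2$ embedding gives the $h^{-1/2}$ factor exactly as you say. The paper also uses $\phi'$ at this step, so your concern about avoiding differentiability of $\phi$ is moot for matching their argument.

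For the $L^\infty$ bound \eqref{est.max} you take a genuinely different (though equally standard) route. You argue by the direct ODE maximum principle, differentiating $M(t)=\max_i\rho_i(t)$ and using that at a maximizing index the discrete Laplacian of $\phi(\rho)$ is nonpositive. The paper instead uses a Stampacchia-type truncation: multiply \eqref{a.eq} by $(\rho_i-M)_+$ with $M=\max_i\rho_i(0)$, sum over $i$, shift one index, and use monotonicity of $\phi$ together with monotonicity of $s\mapsto(s-M)_+$ to conclude that $t\mapsto\sum_i(\rho_i(t)-M)_+^2$ is nonincreasing, hence identically zero. Your approach is perhaps more intuitive; the paper's avoids the (admittedly routine) technicality of differentiating a maximum and works verbatim even if $\phi$ is merely continuous and nondecreasing.
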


\begin{proof}
We multiply \eqref{a.eq} by $(\rho_i-M)_+=\max\{0,\rho_i-M\}$ and
sum over $i=0,\ldots,n$:
\begin{align*}
  \frac{h^2}{2}\pa_t & \sum_{i=0}^n(\rho_i-M)_+^2 \\
	&= \sum_{i=0}^n\big(\phi(\rho_{i-1})-\phi(\rho_i)\big)(\rho_i-M)_+ 
	- \sum_{i=0}^n\big(\phi(\rho_i)-\phi(\rho_{i+1})\big)(\rho_i-M)_+ \\
	&= \sum_{j=0}^n\big(\phi(\rho_{j})-\phi(\rho_{j+1})\big)(\rho_{j+1}-M)_+ 
	- \sum_{i=0}^n\big(\phi(\rho_i)-\phi(\rho_{i+1})\big)(\rho_i-M)_+ \\
	&= -\sum_{i=0}^n\big(\phi(\rho_i)-\phi(\rho_{i+1})\big)\big((\rho_i-M)_+
	- (\rho_{i+1}-M)_+\big) \le 0,
\end{align*}
since $\phi$ is nondecreasing. This shows that
$$
  \sum_{i=0}^n(\rho_i(t)-M)_+^2 \le \sum_{i=0}^n(\rho_i(0)-M)_+^2.
$$
Thus, if $M=\max_{i=0,\ldots,n}\rho_i(0)$, the upper bound in \eqref{est.max}
follows. The lower bound is proved analogously.

For the proof of \eqref{est.grad}, we compute
\begin{align*}
  \frac{h^2}{2}\pa_t & \sum_{i=0}^n \big(\phi(\rho_{i+1})-\phi(\rho_i)\big)^2
	= h^2\sum_{i=0}^n(\phi(\rho_{i+1})-\phi(\rho_i)\big)
	\big(\phi'(\rho_{i+1})\pa_t\rho_{i+1}-\phi'(\rho_i)\pa_t\rho_i\big) \\
	&= \sum_{i=0}^n(\phi(\rho_{i+1})-\phi(\rho_i)\big)
	\phi'(\rho_{i+1})\big(\phi(\rho_{i}) - 2\phi(\rho_{i+1}) + \phi(\rho_{i+2})\big) \\
	&\phantom{xx}{}- \sum_{i=0}^n\big(\phi(\rho_{i+1})-\phi(\rho_i)\big) 
	\phi'(\rho_i)\big(\phi(\rho_{i-1}) - 2\phi(\rho_{i}) + \phi(\rho_{i+1})\big).
\end{align*}
Making the change of variables $i\mapsto i-1$ in the first sum and rearranging
the terms, we find that
$$
  \frac{h^2}{2}\pa_t\sum_{i=0}^n\big(\phi(\rho_{i+1})-\phi(\rho_i)\big)^2
	= -\sum_{i=0}^n\phi'(\rho_i)
	\big(\phi(\rho_{i-1}) - 2\phi(\rho_{i}) + \phi(\rho_{i+1})\big)^2 \le 0.
$$
Consequently, for any $j=0,\ldots,n-1$ and $t>0$,
\begin{align*}
  \big(\phi(\rho_{j+1}(t))-\phi(\rho_j(t))\big)^2
	&\le \sum_{i=0}^n\big(\phi(\rho_{i+1}(t))-\phi(\rho_i(t))\big)^2 \\
	&\le \sum_{i=0}^n\big(\phi(\rho_{i+1}(0))-\phi(\rho_i(0))\big)^2
	= h|\na_h\phi(\rho_i(0))|_2^2.
\end{align*}
Taking the maximum over $j=0,\ldots,n-1$ shows \eqref{est.grad}.
\end{proof}

\begin{corollary}\label{coro1}
Let $\phi$ be nondecreasing and invertible, 
$h>0$, and let $\rho=(\rho_0,\ldots,\rho_n)$ 
be the solution to \eqref{a.eq}. 
We assume that $m:=\min_{i=0,\ldots,n}\rho_i(0)>0$ and set
$M:=\max_{i=0,\ldots,n}\rho_i(0)$. Then
\begin{equation}\label{est.grad2}
  \max_{i=0,\ldots,n}|\na_h\phi'(\rho_i)|
	\le h^{-1/2}\max_{s\in[\phi^{-1}(m),\phi^{-1}(M)]}
	\bigg|\frac{\phi''(s)}{\phi'(s)}\bigg|
	|\na_h\rho_i(0)|_2,
\end{equation}
where $|\na_h\rho_i(0)|_2$ is defined in \eqref{norm}.
\end{corollary}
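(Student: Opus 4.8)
The plan is to combine the two a priori bounds of Lemma~\ref{lem.est} with a discrete chain rule. The first ingredient is the maximum principle \eqref{est.max}: since $m>0$, it confines every $\rho_i(t)$ to the compact interval $[m,M]\subset(0,\infty)$, on which $\phi$ is smooth, $\phi$ being invertible and nondecreasing forces $\phi'\ge0$, and
\[
  C_\phi:=\max_{s\in[m,M]}\left|\frac{\phi''(s)}{\phi'(s)}\right|
\]
is finite; this is, up to the precise domain of the maximum, the constant appearing in \eqref{est.grad2}.

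The heart of the argument is the pointwise estimate, valid for every $i$ and every $t>0$ because $\rho_i(t),\rho_{i+1}(t)\in[m,M]$,
\[
  |\phi'(\rho_{i+1})-\phi'(\rho_i)|
  = \left|\int_{\rho_i}^{\rho_{i+1}}\frac{\phi''(s)}{\phi'(s)}\,\phi'(s)\,ds\right|
  \le C_\phi\left|\int_{\rho_i}^{\rho_{i+1}}\phi'(s)\,ds\right|
  = C_\phi\,|\phi(\rho_{i+1})-\phi(\rho_i)|,
\]
where I used $\phi'\ge0$ to replace $\int|\phi'|$ by $|\int\phi'|$; equivalently one may invoke Cauchy's mean value theorem for the pair $(\phi',\phi)$, which produces an intermediate point in $[m,M]$. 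Dividing by $h$ and maximizing over $i$ gives $\max_i|\na_h\phi'(\rho_i(t))|\le C_\phi\max_i|\na_h\phi(\rho_i(t))|$, and then the gradient decay \eqref{est.grad} bounds the right-hand side by $C_\phi\,h^{-1/2}|\na_h\phi(\rho_i(0))|_2$. To express this in terms of $\rho(0)$ rather than $\phi(\rho(0))$, I would use the mean value theorem once more, $\phi(\rho_{i+1}(0))-\phi(\rho_i(0))=\phi'(\zeta_i)(\rho_{i+1}(0)-\rho_i(0))$ with $\zeta_i\in[m,M]$, whence $|\na_h\phi(\rho_i(0))|_2\le(\max_{[m,M]}\phi')\,|\na_h\rho_i(0)|_2$; absorbing this last finite factor into the constant produces \eqref{est.grad2}.

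The only genuine obstacle is the middle display: one must control the discrete ratio $(\phi'(\rho_{i+1})-\phi'(\rho_i))/(\phi(\rho_{i+1})-\phi(\rho_i))$ uniformly in $i$ and $t$ by $C_\phi$, and this is exactly where the hypotheses are spent — $m>0$ keeps $\phi'$ bounded away from $0$ so that $\phi''/\phi'$ is bounded on the relevant interval, invertibility and monotonicity of $\phi$ give $\phi'\ge0$ so that the integral manipulation is legitimate, and the maximum principle \eqref{est.max} is what guarantees that all the intermediate values, and hence the suprema, stay inside $[m,M]$. The remaining steps are routine bookkeeping of constants.
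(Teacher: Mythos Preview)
Your argument is essentially the paper's: use the mean value theorem (the paper applies it to $\phi'\circ\phi^{-1}$; your integral identity and Cauchy's MVT are equivalent reformulations) to get $|\phi'(\rho_{i+1})-\phi'(\rho_i)|\le C_\phi\,|\phi(\rho_{i+1})-\phi(\rho_i)|$, then invoke \eqref{est.grad} and \eqref{est.max}. The paper's proof in fact stops at $h^{-1/2}C_\phi\,|\na_h\phi(\rho_i(0))|_2$; the appearance of $|\na_h\rho_i(0)|_2$ in \eqref{est.grad2} is a misprint (as is the interval $[\phi^{-1}(m),\phi^{-1}(M)]$, which should be $[m,M]$, exactly as you took it), so your final conversion step is unnecessary and, as you noticed, would introduce an extra factor of $\max_{[m,M]}\phi'$ not present in the stated constant.
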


\begin{proof}
First, note that $m\le\rho_i(t)\le M$ for all $i=0,\ldots,n$ and $t>0$,
by Lemma \ref{lem.est}.
Then the result follows from the mean value theorem. Indeed, we have for some
$\xi$ between $\rho_{i+1}$ and $\rho_i$,
\begin{align*}
  h^{-1}\big|\phi'(\rho_{i+1})-\phi'(\rho_i)\big|
	&= \frac{1}{h}\big|(\phi'\circ\phi^{-1})(\phi(\rho_{i+1}))-(\phi'\circ\phi^{-1})
	(\phi(\rho_i))\big| \\
	&= \bigg|\frac{\phi''(\phi^{-1}(\xi))}{\phi'(\phi^{-1}(\xi))}\bigg|
	\big|\phi(\rho_{i+1})-\phi(\rho_i)\big| \\
	&\le \frac{1}{h}\max_{s\in[\phi^{-1}(m),\phi^{-1}(M)]}
	\bigg|\frac{\phi''(s)}{\phi'(s)}\bigg|
	\max_{j=0,\ldots,n}\big|\phi(\rho_{j+1})-\phi(\rho_j)\big|,
\end{align*}
and we conclude after applying \eqref{est.grad}.
\end{proof}

\begin{example}\label{coro2}\rm
Let $\phi(s)=s^\alpha$ for $\alpha\in(0,1)$, $h>0$ and 
let $\rho=(\rho_0,\ldots,\rho_n)$ be the solution to \eqref{a.eq}
with $m:=\min_{i=0,\ldots,n}\rho_i(0)>0$ and
$M:=\max_{i=0,\ldots,n}\rho_i(0)$. We claim that
$$
  \min_{i=0,\ldots,n}\phi'(\rho_i) 
	\le M^{\alpha-1}, \quad
	\max_{i=0,\ldots,n}|\na_h\phi'(\rho_i)| \le (1-\alpha)m^{-2/\alpha}h^{-1/2}
	|\na_h\rho_i(0)|_2,
$$
where $|\na_h\rho_i(0)|_2$ is defined in \eqref{norm}.
Indeed, the first statement follows from $\alpha<1$ and \eqref{est.max}:
$$
  \min_{i=0,\ldots,n}\phi'(\rho_i) 
	= \Big(\max_{i=0,\ldots,n}\rho_i\Big)^{\alpha-1} \le M^{\alpha-1},
$$
and the second statement is a consequence of Corollary \ref{coro1}
evaluating the right-hand side of \eqref{est.grad2}.
\end{example}

\end{appendix}


\end{document}